\newtheorem{thm}{Theorem}[section]
\newtheorem{theorem}[thm]{Theorem}
\newtheorem{corollary}[thm]{Corollary}
\newtheorem{lemma}[thm]{Lemma}
\newtheorem{proposition}[thm]{Proposition}
\theoremstyle{definition}
\newtheorem{definition}[thm]{Definition}
\newtheorem{example}[thm]{Example}
\newtheorem{remark}[thm]{Remark}
\begin{document}

\newcommand{\comment}[1]
{{\color{blue}\rule[-0.5ex]{2pt}{2.5ex}}
\marginpar{\small\begin{flushleft}\color{blue}#1\end{flushleft}}}

\newcommand{\id}{\relax{\rm 1\kern-.28em 1}}
\newcommand{\R}{\mathbb{R}}
\newcommand{\C}{\mathbb{C}}
\newcommand{\Z}{\mathbb{Z}}
\newcommand{\Q}{\mathbb{Q}}
\newcommand{\bD}{\mathbb{D}}
\newcommand{\bG}{\mathbb{G}}
\newcommand{\bP}{\mathbb{P}}
\newcommand{\g}{\mathfrak{G}}
\newcommand{\e}{\epsilon}
\newcommand{\cA}{\mathcal{A}}
\newcommand{\cB}{\mathcal{B}}
\newcommand{\cC}{\mathcal{C}}
\newcommand{\cD}{\mathcal{D}}
\newcommand{\cI}{\mathcal{I}}
\newcommand{\cL}{\mathcal{L}}
\newcommand{\cO}{\mathcal{O}}
\newcommand{\cG}{\mathcal{G}}
\newcommand{\cJ}{\mathcal{J}}
\newcommand{\cF}{\mathcal{F}}
\newcommand{\cP}{\mathcal{P}}
\newcommand{\cU}{\mathcal{U}}
\newcommand{\ep}{\mathcal{E}}
\newcommand{\E}{\mathcal{E}}
\newcommand{\cH}{\mathcal{O}}
\newcommand{\cV}{\mathcal{V}}
\newcommand{\cPO}{\mathcal{PO}}
\newcommand{\cHol}{\mathrm{Hol}}
\newcommand{\cp}{\mathcal{P}}

\newcommand{\rGL}{\mathrm{GL}}
\newcommand{\rSU}{\mathrm{SU}}
\newcommand{\rSL}{\mathrm{SL}}
\newcommand{\rPSL}{\mathrm{PSL}}
\newcommand{\rSO}{\mathrm{SO}}
\newcommand{\rOSp}{\mathrm{OSp}}
\newcommand{\rSpin}{\mathrm{Spin}}
\newcommand{\rsl}{\mathrm{sl}}
\newcommand{\rM}{\mathrm{M}}
\newcommand{\rdiag}{\mathrm{diag}}
\newcommand{\rP}{\mathrm{P}}
\newcommand{\rdeg}{\mathrm{deg}}
\newcommand{\pt}{\mathrm{pt}}
\newcommand{\red}{\mathrm{red}}

\newcommand{\bm}{\mathbf{m}}

\newcommand{\M}{\mathrm{M}}
\newcommand{\End}{\mathrm{End}}
\newcommand{\Hom}{\mathrm{Hom}}
\newcommand{\diag}{\mathrm{diag}}
\newcommand{\rspan}{\mathrm{span}}
\newcommand{\rank}{\mathrm{rank}}
\newcommand{\Gr}{\mathrm{Gr}}
\newcommand{\ber}{\mathrm{Ber}}

\newcommand{\str}{\mathrm{str}}
\newcommand{\Sym}{\mathrm{Sym}}
\newcommand{\tr}{\mathrm{tr}}
\newcommand{\defi}{\mathrm{def}}
\newcommand{\Ber}{\mathrm{Ber}}
\newcommand{\spec}{\mathrm{Spec}}
\newcommand{\sschemes}{\mathrm{(sschemes)}}
\newcommand{\sschemeaff}{\mathrm{ {( {sschemes}_{\mathrm{aff}} )} }}
\newcommand{\rings}{\mathrm{(rings)}}
\newcommand{\Top}{\mathrm{Top}}
\newcommand{\sarf}{ \mathrm{ {( {salg}_{rf} )} }}
\newcommand{\arf}{\mathrm{ {( {alg}_{rf} )} }}
\newcommand{\odd}{\mathrm{odd}}
\newcommand{\alg}{\mathrm{(alg)}}
\newcommand{\sa}{\mathrm{(salg)}}
\newcommand{\sets}{\mathrm{(sets)}}
\newcommand{\smflds}{\mathrm{(smflds)}}
\newcommand{\mflds}{\mathrm{(mflds)}}
\newcommand{\SA}{\mathrm{(salg)}}
\newcommand{\salg}{\mathrm{(salg)}}
\newcommand{\varaff}{ \mathrm{ {( {var}_{\mathrm{aff}} )} } }
\newcommand{\svaraff}{\mathrm{ {( {svar}_{\mathrm{aff}} )}  }}
\newcommand{\ad}{\mathrm{ad}}
\newcommand{\Ad}{\mathrm{Ad}}
\newcommand{\pol}{\mathrm{Pol}}
\newcommand{\Lie}{\mathrm{Lie}}
\newcommand{\Proj}{\mathrm{Proj}}
\newcommand{\rGr}{\mathrm{Gr}}
\newcommand{\rFl}{\mathrm{Fl}}
\newcommand{\rPol}{\mathrm{Pol}}
\newcommand{\rdef}{\mathrm{def}}

\newcommand{\uspec}{\underline{\mathrm{Spec}}}
\newcommand{\uproj}{\mathrm{\underline{Proj}}}

\newcommand{\sym}{\cong}

\newcommand{\al}{\alpha}
\newcommand{\lam}{\lambda}
\newcommand{\de}{\delta}
\newcommand{\ttau}{\tilde \tau}
\newcommand{\D}{\Delta}
\newcommand{\s}{\sigma}
\newcommand{\lra}{\longrightarrow}
\newcommand{\ga}{\gamma}
\newcommand{\ra}{\rightarrow}

\newcommand{\NOTE}{\bigskip\hrule\medskip}

\medskip

\centerline{\LARGE \bf On SUSY curves}

\vskip 1cm

\centerline{ R. Fioresi, S. D. Kwok}

\medskip

\centerline{\it Dipartimento di Matematica, Universit\`{a} di
Bologna }
 \centerline{\it Piazza di Porta S. Donato, 5. 40126 Bologna. Italy.}
\centerline{{\footnotesize e-mail: rita.fioresi@unibo.it,
stephendiwen.kwok@unibo.it}}

\section{Introduction} \label{intro-sec}

In this note we give a summary of some elementary
results in the theory of super Riemann surfaces (SUSY curves),
which are mostly known, but are not readily available in the
literature. Our main source is Manin, who has provided
with a terse introduction to this subject in \cite{ma1}. More
recently Freund and Rabin have given important results on the uniformization
(see \cite{rabin}) and Witten has written an 
account of the state of the art of this subject, 
from the physical point of view, in \cite{witten}.

\medskip
\noindent
The paper is organized as follows.

\smallskip

In Sections \ref{prelim-sec} and \ref{pigeom-sec}, 
we are going to recall briefly the
main definitions of supergeometry and study in detail the examples
of super projective space and $\Pi$-projective line, which are very
important in the theory of SUSY curves.

\smallskip

In Section \ref{susyc-sec} we discuss some general facts on SUSY curves,
including the {\sl theta characteristic}, while in  
Section \ref{genus1-sec} we prove some characterization results
concerning genus zero and genus one SUSY curves.

\section{Preliminaries} \label{prelim-sec}

We are going to briefly recall some basic definitions of
analytic supergeometry. For more details see \cite{ma1}, \cite{ma2},
\cite{vsv2}, \cite{ccf}, \cite{dm} and the classical references
\cite{Berezin}, \cite{BerLeites}.

\medskip
\noindent
Let our ground field be $\C$.

\medskip

A {\it superspace} $S=(|S|, \cO_S)$ is a topological space $|S|$
endowed with a sheaf of superalgebras $\cO_S$ such that the stalk at
a point  $x\in |S|$, denoted by $\cO_{S,x}$, is a local
superalgebra.

\smallskip

A {\it morphism} $\phi:S \lra T$ of superspaces is given by
$\phi=(|\phi|, \phi^*)$, where $|\phi|: |S| \lra |T|$ is a map of
topological spaces and $\phi^*:\cO_T \lra \phi_*\cO_S$ is such
that $\phi_x^*(\bm_{|\phi|(x)})=\bm_x$ where $\bm_{|\phi|(x)}$ and
$\bm_{x}$ are the maximal ideals in the stalks $\cO_{T,|\phi|(x)}$
and $\cO_{S,x}$ respectively. 

\medskip

The superspace $\C^{p|q}$ is the topological space $\C^p$ endowed with
the following sheaf of superalgebras. For any open subset $U \subset
\C^p$
$$
\cO_{\C^{p|q}}(U)=\cHol_{\C^p}(U)\otimes \wedge(\xi_1, \dots, \xi_q),
$$
where 
$\cHol_{\C^p}$ denotes the complex analytic sheaf on $\C^p$ and
$\wedge(\xi_1, \dots, \xi_q)$ is the exterior algebra in the variables
$\xi_1, \dots, \xi_q$.

\medskip

A  {\it supermanifold} of dimension $p|q$ is a superspace $M=(|M|,
\cO_M)$ which is locally isomorphic to $\C^{p|q}$, as superspaces.
A {\it morphism} of supermanifolds is simply a morphism of
superspaces.

\medskip

We now look at an important example of supermanifold, namely the
{\sl projective superspace}. 

\smallskip

Let $\bP^m=\C^{m+1}\setminus\{0\} \big/ \sim$ 
be the ordinary complex projective space of dimension $m$
with homogeneous coordinates $z_0, \dots, z_m$; $[z_0, \dots, z_m]$
denotes as usual an equivalence class in $\bP^m$.
Let $\{U_i\}_{i=1,\dots, m}$ be the affine cover $U_i=
\{[z_0, \dots, z_m] \, | \, z_i \neq 0\}$,
$U_i\cong \C^m$. On each $U_i$ we take the global ordinary coordinates
$u_0^i, \dots, \hat u_i^i, \dots u_m^i$, $u_k:=z_k/z_i$
($ \hat u_i^i$ means we are omitting the variable $u_i^i$
from the list). We now want to 
define the sheaf of superalgebras $\cO_{U_i}$ on the topological
space $U_i$: 
$$
\cO_{U_i}(V)=\cHol_{U_i}(V) \otimes \wedge(\xi_1^i, \dots \xi_n^i), 
\qquad V \, \hbox{open in} \, U_i 
$$
where $\cHol_{U_i}$ is the sheaf of holomorphic functions on $U_i$
and $\xi_1^i, \dots \xi_n^i$ are odd variables. 

\smallskip

As one can readily check $\cU_i=(U_i,\cO_{U_i})$ is a supermanifold, isomorphic
to $\C^{m|n}$. We now define the morphisms
$\phi_{ij}:\cU_i \cap \cU_j \mapsto  \cU_i \cap \cU_j$,
where the domain is thought as an open submanifold of $\cU_i$,
while the codomain as an open submanifold of $\cU_j$. By the Chart's
Theorem the $\phi_{ij}$'s are determined by
the ordinary morphisms, together with the choice of $m$ even and 
$n$ odd sections in
$\cO_{U_i}(U_i \cap U_j)$. We write:
\begin{equation}
\phi_{ij}:\,(u_0^i, \dots, \hat u_i^i, \dots u_m^i, \xi_1^i, \dots , \xi_n^i) 
\, \mapsto \, 
\left(\frac{u_1^i}{u_j^i}, \dots, \frac{1}{u_j^i}, \dots , 
\frac{u_m^i}{u_j^i},  \frac{\xi_1^i}{u_j^i}, \dots , 
\frac{\xi_n^i}{u_j^i} \right) \label{proj-trans-maps}
\end{equation}
where on the right hand side the $1/u_j^i$ appears in the $i^{th}$ position and
the $j^{th}$ position is omitted. 
As customary in the literature, the formula (\ref{proj-trans-maps}) is just a synthetic
way to express the pullbacks:
$$
\phi_{ij}^*(u^j_k) \, = \, \frac{u_k^i}{u_j^i}, \quad 0 \leq k \neq j \leq m, \qquad
\phi_{ij}^*(\xi^j_l)\, = \, \frac{\xi_l^i}{u_j^i}, \quad 0 \leq l \leq n,
$$
One can easily check that the $\phi_{ij}$'s satisfy the compatibility
conditions:
$$
\phi_{ij}\phi_{ki}=\phi_{jk}, \qquad \hbox{on} \quad \cU_i \cap \cU_j \cap \cU_k
$$
hence they allow us to define uniquely a sheaf, denoted with 
$\cO_{\bP^{m|n}}$, hence a supermanifold
structure on the topological space $\bP^m$. The supermanifold
$(\bP^m, \cO_{\bP^{m|n}})$ is called the \textit{projective space of dimension
$m|n$}.

\smallskip

One can replicate the same construction and obtain more generally 
a supermanifold structure for the topological space:
$\bP(V):= V \setminus \{0\}$, where $V$ in any complex super vector space.

\medskip

We now introduce the functor of points approach to supergeometry.

\medskip

The \textit{functor of points} of a supermanifold $X$ is the functor (denoted
with the same letter)
$X:\smflds^o \lra \sets$, $X(T)=\Hom(T,X)$, $X(f)\phi=f \circ \phi$.
The functor of points characterizes completely the supermanifold $X$:
in fact, two supermanifolds are isomorphic if and only if their functor
of points are isomorphic. This is one of the statements of Yoneda's
Lemma, for more details see \cite{ccf} ch. 3. 

\medskip

The functor of points approach allows us to retrieve some of the
geometric intuition. For example, let us consider the functor 
$P:\smflds^o \lra \sets$ associating to each supermanifold $T$ the
locally free subsheaves of $\cO_T^{m+1|n}$ of rank $1|0$, where
$\cO_T^{m+1|n}:=\C^{m+1|n}\otimes \cO_T$. $P$ is defined in an obvious way
on the morphism: any morphism of supermanifolds $\phi:T \lra S$
defines a corresponding morphism of the structural sheaves
$\phi^*:\cO_S \lra  \phi_*\cO_T$, so that also $P(\phi)$
is defined.

\smallskip

The next proposition allows us
to identify the functor $P$ with the functor of points of
the super projective space $\bP^{m|n}$.

\begin{proposition}\label{fopt-proj-prop}
There is a one-to-one correspondence between the two sets:
$$
P(T) \longleftrightarrow \bP^{m|n}(T), \qquad T \in \smflds
$$
which is functorial in $T$. In other words $P$ is the functor
of points of $\bP^{m|n}(T)$.
\end{proposition}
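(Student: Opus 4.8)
The plan is to construct mutually inverse natural transformations between the functors $P$ and $\bP^{m|n}$, using the standard affine cover $\{\cU_i\}$ of $\bP^{m|n}$ together with the Chart Theorem to describe morphisms $T \to \bP^{m|n}$ locally, and the standard description of a rank $1|0$ locally free subsheaf in terms of local generators. The key observation is that an element of $\bP^{m|n}(T)$, by gluing, is exactly a collection of morphisms $T_i \to \cU_i$ (where $\{|T_i|\}$ is an open cover of $|T|$) compatible via the transition maps $\phi_{ij}$, and such a morphism to $\cU_i \cong \C^{m|n}$ is, by the Chart Theorem, a tuple of $m$ even and $n$ odd sections of $\cO_T(T_i)$; these are precisely the "inhomogeneous coordinates" $u_k^i/u_i^i$, $\xi_l^i/u_i^i$ of the corresponding line subsheaf.

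First I would define the map $\bP^{m|n}(T) \to P(T)$. Given $\psi \in \bP^{m|n}(T)$, pull back the cover to get $|T_i| = |\psi|^{-1}(U_i)$, and on $T_i$ let $\psi$ be given (via $\cU_i \cong \C^{m|n}$ and the Chart Theorem) by even sections $a_k^i \in \cO_T(T_i)_0$ ($k \neq i$) and odd sections $\beta_l^i \in \cO_T(T_i)_1$. Define $\cL|_{T_i} \subset \cO_T^{m+1|n}|_{T_i}$ to be the subsheaf generated by the single even section $v^i = (a_0^i, \dots, \underset{i}{1}, \dots, a_m^i \mid \beta_1^i, \dots, \beta_n^i)$. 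One checks, using the explicit form of $\phi_{ij}$ in (\ref{proj-trans-maps}), that on overlaps $T_i \cap T_j$ one has $v^j = (u_j^i/u_i^i)\,v^i$ — equivalently $v^i$ and $v^j$ generate the same subsheaf, since the scalar relating them is an even unit (its degree-zero part is $a_j^i$, which is invertible on the locus where the $j$-th homogeneous coordinate is nonzero; more precisely the pullback of $u_j^i$ is a unit there). Hence the local subsheaves glue to a global $\cL \in P(T)$, and this construction is visibly functorial in $T$: a morphism $f\colon T' \to T$ pulls $v^i$ back to the corresponding generator, hence pulls $\cL$ back correctly.

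Next I would define the inverse $P(T) \to \bP^{m|n}(T)$. Given $\cL \in P(T)$, locally on a suitable open cover $\{|T_i|\}$ choose a free generator $w = (w_0, \dots, w_m \mid \eta_1, \dots, \eta_n)$ of $\cL$ with $w_i$ an even unit (such a cover exists: the reduced evaluation of $w$ is a nonzero vector in $\C^{m+1}$ at each point, so some even coordinate is a unit near that point, and one refines the cover so that index is constant $= i$ on $|T_i|$). Rescaling $w$ by $w_i^{-1}$, we may assume $w_i = 1$; then define $\psi_i \colon T_i \to \cU_i \cong \C^{m|n}$ by the tuple $(w_0, \dots, \widehat{w_i}, \dots, w_m \mid \eta_1, \dots, \eta_n)$, which is legitimate by the Chart Theorem. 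The normalization $w_i = 1$ makes $w$ unique up to nothing, so the $\psi_i$ are independent of the choice of generator; comparing the normalized generators on $T_i \cap T_j$ gives exactly the relation encoded by $\phi_{ij}$, so the $\psi_i$ glue to $\psi \in \bP^{m|n}(T)$. Finally I would check the two constructions are mutually inverse — both composites are the identity because in each direction one recovers the normalized local generator / the normalized local coordinate tuple one started from — and that everything is natural in $T$, which is immediate from the compatibility of all the local constructions with pullback.

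The main obstacle is the bookkeeping around \emph{units and refinements of covers}: one must argue carefully that for $\cL \in P(T)$ a local generator can be chosen with a prescribed even coordinate a unit (this uses that the stalks are local superalgebras, so an even section whose reduction is nonzero at a point is invertible in a neighborhood), and that the resulting normalization is canonical enough that the locally defined morphisms to $\cU_i$ are genuinely independent of the chosen generator and hence glue. The transition-map computation — verifying that the cocycle relating local generators is precisely the one defining $\phi_{ij}$ in (\ref{proj-trans-maps}) — is routine once the normalization convention ($i$-th coordinate $=1$) is fixed, and I would present it tersely rather than in full.
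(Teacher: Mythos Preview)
Your proposal is correct and follows essentially the same approach as the paper: both arguments use the affine cover $\{\cU_i\}$, the Chart Theorem, and normalization of a local generator so that its $i$-th even coordinate equals $1$, then verify gluing via the transition maps (\ref{proj-trans-maps}). The only cosmetic difference is that you construct the map $\bP^{m|n}(T)\to P(T)$ first and the paper does $P(T)\to\bP^{m|n}(T)$ first; your discussion of why some even coordinate of a local generator must be a unit and why the normalized generator is independent of choices is slightly more explicit than the paper's, which leaves these as ``routine checks.''
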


\begin{proof} 
We briefly sketch the proof, leaving to the reader the routine
checks. Let us start with an element in $P(T)$, that is a
locally free sheaf $\cF_T \subset \cO_T^{m+1|n}$ of rank $1|0$. 
We want to associate to $\cF_T$
a $T$-point of $\bP^{m|n}$ that is a morphism $T \lra\bP^{m|n}$. 
First cover $T$ with $V_i$ so that $\cF_T|_{V_i}$ is free. Hence:
$$
\cF_T(V_i)=\rspan\left\{(t_0,  \dots, t_m,\theta_1, \dots, \theta_n)\right\}
\, \subset \, \cO_T^{m+1|n}(V_i)
$$
where we assume that the section $t_i \in \cO_T(V_i)$ is invertible
without loss of generality, since the 
rank of  $\cF_T$ is $1|0$ (this assumption may require to change the
cover). 
Hence:
$$
\cF_T(V_i)=\rspan\left\{(t_0/t_i,  \dots, 1, \dots, t_m/t_i,
\theta_1/t_i, \dots, \theta_n/t_i)\right\}
$$
Any other basis $(t'_1, \dotsc, t'_{m+1},$ $ \theta'_1, \dotsc, \theta'_n)$ 
of $\mathcal{F}_T(V_i)$ is a multiple of 
$(t_0,  \dots, t_m$, $\theta_1, \dots, \theta_n)$ 
by an invertible section on $V_i$, hence we have:
$$
t'_j/t'_i = t_j/t_i\qquad 
\theta'_k/t'_i = \theta_k/t_i
$$
\noindent Thus the functions $t_j/t_i, \theta_k/t_i$, which a priori 
are only defined on open subsets where $\mathcal{F}_T$ is a free module, 
are actually defined on the whole of the open set where $t_i$ is 
invertible, being independent of the choice of basis for $\mathcal{F}_T(V_i)$.

\smallskip\noindent
We have then immediately a morphism of supermanifolds
$f_i:\cV_i \lra \cU_i \subset \bP^{m|n}$: 
\begin{equation}
f_i^*(u_1^i)=t_0/t_i, \dots, f_i^*(u_m^i)= t_m/t_i, \quad
 f_i^*(\xi_1^i)=\theta_1/t_i ,\dots,  f_i^*(\xi_n^i)=\theta_n/t_i
\end{equation}
where $\cV_i=(V_i, \cO_T|_{V_i})$ and $\cU_i=(U_i, \cO_{\bP^{m|n}}|_{U_i})$.
It is immediate to check that the $f_i$'s agree on $\cV_i  \cap \cV_j$,
so they glue to give
a morphism $f: T \lra  \bP^{m|n}$.

\medskip

As for the vice versa, consider $f:T \lra  \bP^{m|n}$ and define 
$V_i = |f|^{-1}(U_i)$. The morphism $f|_{V_i}$ by the Chart's Theorem
corresponds to the choice of $m$ even and $n$ odd sections in $\cO_T(V_i)$:
$v_1, \dots, v_m$, $\eta_1, \dots, \eta_n$. 
We can then define immediately the free sheaves
$\cF_{V_i}\subset \cO_T|_{V_i}^{m+1|n}$ of rank $1|0$ on each of the $V_i$ as
$$
\cF_{V_i}(V)\, := \, \rspan \{(v_1|_{V}  ,\dots,1, \dots v_m|_V,
\eta_1|_V ,\dots, \eta_n|_V) \}
$$
($1$ in the $i^{th}$ position).
As one can readily check the $\cF_{V_i}$ glue to give a locally
free subsheaf of $\cO_T^{m+1|n}$. 
\end{proof}

We now want to define the {\sl $\Pi$-projective line} which represents 
in some sense a generalization of the super projective space of 
dimension $1|1$ that 
we defined previously. 

\medskip

Let $\bP^1=\C^2\setminus \{0\}/ \sim$ be the ordinary complex projective 
line with homogeneous coordinates $z_0$, $z_1$. 
Define, as we did before, the following supermanifold structure
on each $U_i$ belonging to the open cover $\{U_0, U_1\}$ of $\bP^1$:
$\cO_{U_i}(V)$ $=$ $\cHol_{U_i}(V) \otimes \wedge(\xi)$, 
$V$ open in $U_i$, $i=1,2$, so that 
$\cU_i=(U_i, \cO_{U_i})$ is a supermanifold isomorphic to $\C^{1|1}$.
At this point, instead of the change of chart $\phi_{12}$, 
we define the following
transition map (there is only one such):
$$
\begin{array}{cccc}
\psi_{12}: & \cU_0 \cap \cU_1 & \lra & \cU_0 \cap \cU_1 \\ \\
& (u, \xi) & \mapsto & \left(\frac{1}{u}, -\frac{\xi}{u^2}\right)  
\end{array}
$$

As one can readily check, this defines a supermanifold structure on 
the topological space $\bP^1$ and
we call this supermanifold the \textit{$\Pi$-projective line}
$\bP^{1|1}_\Pi=(\bP^1, \cO_{\bP^1_{\Pi}})$.

\medskip

In the next section we will 
characterize its functor of points.

\section{The $\Pi$-projective line}
\label{pigeom-sec}

In this section we want to take advantage of the functor of points
approach in order to give a more geometric point of view on the
$\Pi$-projective line and to understand in which sense it is a
generalization of the super projective line, whose functor of points was
described in the previous section.
Let us start with an overview of the
ordinary geometric construction of the projective line.

\smallskip
The topological space $\bP^1$ consists
of the 1-dimensional subspaces of $\C^2$, that is
$\bP^1=\C^2 \setminus \{0\}/\sim$, where $(z_0,z_1) \sim (z_0',z_1')$
if and only if $(z_0,z_1) =\lambda (z_0',z_1')$, $\lambda \in \C^\times$.
In other words, the equivalence class $[z_0,z_1] \in \bP^1$ consists
of all the points in $\C^2$ which are in the orbit of $(z_0,z_1)$
under the action of $\C^\times$ by left (or right) multiplication. 

\smallskip

Now we go to the functor of points of $\bP^{1|1}$.
A $T$-point of $\bP^{1|1}$ locally is a $1|0$-submodule 
of $\cO_T^{1|1}(V)$ ($V$ is a suitably
chosen open in $T$). So it is locally an equivalence class 
$[z_0,z_1,\eta_0, \eta_1]$ where we identify two
quadruples $(z_0,z_1,\eta_0, \eta_1) \sim (z_0',z_1',\eta_0', \eta_1')$ 
if and only if $z_i=\lambda z_i'$
and $\eta_i=\lambda \eta_i'$, $i=0,1$, 
$\lambda \in \cO_T(V)^\times$. 
In other words, exactly
as we did before for the case of $\bP^1$,  we identify those elements
in $\C^{2|1}(T)$ that belong to the same orbit of 
the multiplicative group of the complex
field $\bG_m^{1|0}(T) \cong \C^\times(T)$.\footnote{All of our arguments 
here take place for an open cover of $T$ in which a $T$ point corresponds 
to a free sheaf  and not just a locally free one. For simplicity of exposition we
omit to mention the cover and the necessary gluing to make all of our argument 
stand.} 
It makes then perfect sense to generalize this construction and look at the 
equivalence classes
with respect to the action of the multiplicative supergroup $\bG_m^{1|1}$,
which is the supergroup with underlying topological space $\C^\times$, with
one global odd coordinate and with group law
(in the functor of points notation):
$$
(a, \al) \cdot (a', \al') = (a  a' + \al \al', a \al'+  \al a').
$$
$\bG_m^{1|1}$ is naturally embedded into $\rGL(1|1)$, the complex general 
linear supergroup
via the morphism (in the functor of points notation):
$$
\begin{array}{ccc}
\bG_m^{1|1}(T) & \lra & \rGL(1|1)(T) \\ \\
(a, \al) & \mapsto & \begin{pmatrix}a & \al \\ \al & a \end{pmatrix}
\end{array}
$$
This is precisely the point of view we are taking in constructing the $\Pi$-projective
line: we identify $T$-points in $\C^{2|2}$ which lie in the same $\bG_m^{1|1}$ orbit, but
instead of looking simply at rank $1|1$ submodules of $\C^{2|2}(T)$ 
we look at a more elaborate structure, which is
matching very naturally the $\bG_m^{1|1}$ action on $\C^{2|2}$. This structure is embodied
by the condition of $\phi$-invariance for a suitable odd endomorphism $\phi$
of $\C^{2|2}$, that we shall presently see. For more details see Appendix 
\ref{D-app}. 

\medskip\noindent 
Consider now 
the supermanifold $\C^{2|2}$, and the odd endomorphism $\phi$ on $\C^{2|2}$ 
given in terms of the standard homogeneous basis $\{e_0, e_1 | \ep_0, \ep_1\}$ by:

\begin{align*}
\Bigg( \begin{array}{cc|cc}
0 & 0 & 1& 0\\
0 & 0 & 0 & 1\\
\hline
1 & 0 & 0 & 0\\
0 & 1 & 0 & 0
\end{array} 
\Bigg)
\end{align*}

\noindent We note that $\phi^2 = 1$. 

In analogy with the projective superspace, we now consider 
the functor $P_\Pi: \smflds^o \lra \sets$,
where 
$$
\begin{array}{rl}
P_\Pi(T) &:= 
\{ \text{rank $1|1$ locally free, $\phi$-invariant subsheaves of } \cO^{2|2}_T\}
\end{array}
$$ 
Here the action of $\phi$ is extended to 
$\cO^{2|2}_T = \mathbb{C}^{2|2} \otimes_{\C} \cO_T$ 
by acting on the first factor.\\

\begin{lemma} \label{basis}
Let $\cF_T \in P_\Pi(T)$. Then there exist an open cover $\{V_i\}$ of $T$,
where $\cF_T(V_i)$ is free and  
a basis $e$, $\ep$ of $\cF_T(V_i)$ such that $\phi(e) = \ep, \phi(\ep) = e$. 
\end{lemma}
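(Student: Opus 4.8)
The plan is to start from an arbitrary local trivialization of $\cF_T$ and then replace the odd generator by its image under $\phi$. Since $\cF_T$ is locally free of rank $1|1$, I would pick an open cover $\{V_i\}$ of $T$ on which $\cF_T(V_i)$ is a free $\cO_T(V_i)$-module with a homogeneous basis $(f,g)$, $f$ even and $g$ odd. Because $\phi$ is odd and $\cF_T$ is $\phi$-invariant, $\phi(f)$ is an odd section of $\cF_T(V_i)$, so one may write $\phi(f)=\mu f+\lambda g$ with $\lambda$ even and $\mu$ odd (and, if needed, $\phi(g)=\alpha f+\beta g$ with $\alpha$ even, $\beta$ odd). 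The candidates for the desired basis are then $e:=f$ and $\ep:=\phi(f)$, the latter being odd.

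The crux is to show that $\lambda$ is a unit on $V_i$, and this is exactly where $\phi^2=1$ enters. One way is a direct computation: using the graded Leibniz sign and $\mu^2=0$, one expands $\phi^2(f)=\phi(\mu f+\lambda g)=\lambda\alpha\,f+\lambda(\beta-\mu)\,g$, and comparing with $\phi^2(f)=f$ in the basis $(f,g)$ forces $\lambda\alpha=1$, so $\lambda$ is invertible in the supercommutative ring $\cO_T(V_i)$. Alternatively, one can pass to the reduced fibre at each point of $V_i$: there the odd section $\mu$ vanishes, so $\bar\phi(\bar f)=\bar\lambda\,\bar g$; since $\bar\phi$ is an involution of the $1|1$ fibre it is injective, which forces $\bar\lambda$ to be nowhere zero, and a section of $\cO_T$ with nowhere-vanishing body is a unit. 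Either way, no refinement of the cover $\{V_i\}$ is required.

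Granting that $\lambda$ is a unit, the pair $(e,\ep)=(f,\ \mu f+\lambda g)$ is expressed in terms of $(f,g)$ by a lower-triangular transition matrix with diagonal entries $1$ and $\lambda$, hence an invertible one; therefore $(e,\ep)$ is again a homogeneous basis of $\cF_T(V_i)$, with $e$ even and $\ep$ odd. By construction $\phi(e)=\ep$, and applying $\phi$ once more and using $\phi^2=1$ gives $\phi(\ep)=\phi^2(e)=e$. Carrying this out on each member of the cover produces the required $\{V_i\}$ together with the bases $e,\ep$, which is the claim.

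I expect the invertibility of $\lambda$ to be the only real obstacle: the point is to recognize that $\phi^2=1$ is precisely what guarantees that $\phi(f)$ can serve as the new odd generator, so that the original trivializing cover already works. Everything else is routine bookkeeping with parities and the Koszul sign rule.
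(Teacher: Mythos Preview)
Your argument is correct. The key step—showing that the even coefficient $\lambda$ of $g$ in $\phi(f)$ is invertible—is handled properly: your direct expansion of $\phi^2(f)$ yields $\lambda\alpha=1$, and your fibrewise alternative (reducing $\mu$ to zero and using that an odd involution of a $1|1$ space is injective) is also valid and convention-free. Once $\lambda$ is a unit, the triangular transition matrix from $(f,g)$ to $(f,\phi(f))$ is invertible, and the relations $\phi(e)=\ep$, $\phi(\ep)=\phi^2(e)=e$ are automatic.

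Your route differs from the paper's, however. The paper writes the matrix $\Psi$ of $\phi$ in an arbitrary basis, uses $\Psi^2=1$ to force the shape
\[
\Psi=\begin{pmatrix}\alpha & a\\ a^{-1} & -\alpha\end{pmatrix},
\]
and then exhibits an explicit change-of-basis matrix $P\in\rGL(1|1)$ with $P\Psi P^{-1}$ equal to the standard off-diagonal form. You instead \emph{define} the new odd generator to be $\phi(f)$ and use $\phi^2=1$ only to check that this really is a generator. Your approach is more conceptual and slightly shorter: it makes transparent why $\phi^2=1$ is exactly the needed hypothesis, and it avoids writing down $P$. The paper's approach, by contrast, gives the explicit conjugating matrix, which may be useful elsewhere (e.g.\ for computations in Proposition~\ref{fopts-pi}). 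Both are perfectly valid; yours is the cleaner existence argument, the paper's the more explicit one.
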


\begin{proof}
Since $\cF_T$ is locally free, there exist an open cover $\{V_i\}$ of $T$,
where $\cF_T(V_i)$ is free with basis, say, $e'$, $\ep'$. Let $\Psi$ 
be the matrix of 
$\phi|_{V_i}$ in this basis. Since $\phi^2 = 1$, we have $\Psi^2 = 1$, which implies 
that $\Psi$ has the form:
\begin{align*}
\Psi := 
\bigg( \begin{array}{c|c}
\alpha & a \\
\hline
a^{-1} & -\alpha
\end{array} 
\bigg)\\
\end{align*}
with $a \in \mathcal{O}^*_T(V_i)_0$, $\alpha \in \mathcal{O}_T(V_i)_1$. 
Let $P \in \rGL(1|1)(\mathcal{O}_T(V_i))$
be the matrix:
\begin{align*}
P := 
\bigg( \begin{array}{c|c}
a^{-1} & 0 \\
\hline
-a^{-1}\alpha & 1
\end{array} 
\bigg)\\
\end{align*}
$P$ is invertible because $a$ is, and one calculates 
that $P\Psi P^{-1} = \Phi$, so $P$ gives the desired change of basis.
\end{proof}

\begin{proposition}\label{fopts-pi}
There is a one-to-one correspondence between the two sets:
$$
P_\Pi(T) \lra \bP^{1|1}_\Pi(T), \qquad T \in \smflds
$$
which is functorial in $T$. In other words $P_\Pi$ is the functor 
of points of $\bP^{1|1}_\Pi$.
\end{proposition}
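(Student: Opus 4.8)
The plan is to mimic the proof of Proposition~\ref{fopt-proj-prop}, using Lemma~\ref{basis} as the replacement for the ``invertible coordinate'' reduction that worked in the ordinary projective case. I would first construct, from a $\phi$-invariant sheaf $\cF_T \in P_\Pi(T)$, a morphism $T \lra \bP^{1|1}_\Pi$, then build the inverse assignment, and finally check functoriality in $T$. Throughout I will work over the open cover $\{V_i\}$ on which $\cF_T(V_i)$ is free with a basis $e,\ep$ adapted to $\phi$ as in Lemma~\ref{basis} (i.e.\ $\phi(e)=\ep$, $\phi(\ep)=e$); the whole argument takes place locally and the final gluing is routine, exactly as footnoted in the text.

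For the forward direction, write the adapted basis vector $e$ in terms of the standard basis $\{e_0,e_1\mid\ep_0,\ep_1\}$ of $\cO_T^{2|2}(V_i)$ as $e=(z_0,z_1,\eta_0,\eta_1)$; applying $\phi$ forces $\ep = (\eta_0,\eta_1,z_0,z_1)$, so the $\phi$-invariant free module is completely determined by the single ``column'' $(z_0,z_1,\eta_0,\eta_1)$, which is exactly the homogeneous-coordinate datum $[z_0,z_1,\eta_0,\eta_1]$ appearing in the functor-of-points description of $\bP^{1|1}_\Pi$ in Section~\ref{pigeom-sec}. On the locus where $z_0$ is invertible (refining the cover if necessary) I normalise to $(1,z_1/z_0,\eta_0/z_0,\eta_1/z_0)$ and define $f_0\colon \cV_0 \lra \cU_0$ by $f_0^*(u)=z_1/z_0$ and $f_0^*(\xi)=\eta_1/z_0$ — note that because $\phi$-invariance ties the odd part to the even part, the single odd coordinate $\xi$ on the $\Pi$-chart $\cU_0\cong\C^{1|1}$ is the right target, not a pair of odd coordinates. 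The key computation is that on the overlap, where $z_1$ is also invertible, the two normalisations are related by $u\mapsto 1/u$ and $\xi \mapsto -\xi/u^2$, i.e.\ precisely the transition map $\psi_{12}$ defining $\bP^{1|1}_\Pi$; this is where the peculiar $-\xi/u^2$ (as opposed to the $\xi/u$ of ordinary $\bP^{1|1}$) must drop out, and I expect the bookkeeping here to be the main obstacle — one has to track how the odd entries of $e$ transform when passing from a $z_0$-normalised basis to a $z_1$-normalised basis, using that any two bases of the rank-$1|1$ module differ by an element of $\bG_m^{1|1}$ (equivalently, by the specific form of $\phi$-invariance), and verify the sign and the square. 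Independence of the choice of adapted basis also needs checking, but this is the same ``multiple by an invertible section'' argument as in Proposition~\ref{fopt-proj-prop}, now carried out inside $\bG_m^{1|1}$.

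For the reverse direction, given $f\colon T\lra \bP^{1|1}_\Pi$ set $V_i=|f|^{-1}(U_i)$; by the Chart's Theorem $f|_{V_i}$ is the datum of one even section $v$ and one odd section $\eta$ in $\cO_T(V_i)$, and I define $\cF_{V_i}\subset \cO_T|_{V_i}^{2|2}$ to be the $\phi$-invariant rank-$1|1$ free submodule generated by the column $(1,v,\eta,0)$ together with its $\phi$-image $(\eta,0,1,v)$ (in the $\cU_0$ chart; symmetrically on $\cU_1$). One checks this is indeed free of rank $1|1$, is $\phi$-invariant by construction, and that on overlaps the two local sheaves agree precisely because the transition data $\psi_{12}$ of $\bP^{1|1}_\Pi$ matches the change-of-generator in $\bG_m^{1|1}$ — this is the same overlap computation as above, read backwards. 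Finally, the two assignments are mutually inverse by construction, and functoriality in $T$ is immediate: a morphism $g\colon T'\lra T$ pulls back $\phi$-invariant locally free subsheaves to $\phi$-invariant locally free subsheaves (since $\phi$ acts on the constant factor $\C^{2|2}$ and hence commutes with pullback), and this is compatible under the correspondence with composition of $T$-points, giving the claimed natural isomorphism $P_\Pi \cong \bP^{1|1}_\Pi(-)$.
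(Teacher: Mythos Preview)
Your overall strategy matches the paper's, but the normalisation step in the forward direction has a genuine gap. Dividing the column $(z_0,z_1,\eta_0,\eta_1)$ by the scalar $z_0$ does \emph{not} produce well-defined sections: the ambiguity in the choice of adapted basis $(e,\ep)$ is the full supergroup $\bG_m^{1|1}$, not just $\bG_m^{1|0}$. Concretely, another adapted basis is $e'=ae+\alpha\ep$ with $a$ even invertible and $\alpha$ odd, and one computes
\[
\frac{z_1'}{z_0'}=\frac{az_1+\alpha\eta_1}{az_0+\alpha\eta_0}
= \frac{z_1}{z_0}+a^{-1}\alpha\, z_0^{-1}\!\left(\eta_1-\frac{z_1}{z_0}\eta_0\right),
\]
which differs from $z_1/z_0$ whenever $\alpha\neq 0$. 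So the ``same invertible-section argument as in Proposition~\ref{fopt-proj-prop}'' fails here. This is exactly the point where the $\Pi$-case diverges from ordinary $\bP^{1|1}$: one must normalise by a genuine $\bG_m^{1|1}$-element to force \emph{both} the $e_0$- and the $\ep_0$-component to the standard form $(1,0)$. The paper does this by right-multiplying $(e\ \ep)$ by
\[
g_0=\begin{pmatrix} s_0^{-1} & -\sigma_0 s_0^{-2}\\ -\sigma_0 s_0^{-2} & s_0^{-1}\end{pmatrix}\in\bG_m^{1|1},
\]
and the resulting invariants are not $s_1/s_0,\ \sigma_1/s_0$ but rather
\[
v_0=s_1 s_0^{-1}-\sigma_1\sigma_0 s_0^{-2},\qquad \nu_0=\sigma_1 s_0^{-1}-s_1\sigma_0 s_0^{-2}.
\]
It is precisely these corrected expressions that satisfy the $\Pi$-transition $(v,\nu)\mapsto(1/v,\,-\nu/v^2)$ on overlaps; with your naive quotients the overlap check cannot even be formulated, since $\eta_0/z_0$ has been discarded and $\eta_0/z_1$ cannot be recovered from $(z_1/z_0,\eta_1/z_0)$.

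The same issue contaminates your reverse construction: the generator $(1,v,\eta,0)$ you propose has its odd datum sitting in the $\ep_0$-slot, so feeding it back through your own forward map (which reads off the $\ep_1$-slot) returns $\eta_1/z_0=0$, and the assignments are not mutually inverse. The fix is the same: in the $\cU_0$-chart take the generator to be $(1,0,t_0,\tau_0)$ in the ordering $(e_0,\ep_0,e_1,\ep_1)$, i.e.\ with the $\ep_0$-component already killed, together with its $\phi$-image $(0,1,\tau_0,t_0)$. Then everything glues and inverts correctly.
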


\begin{proof} 
We briefly sketch the proof, leaving to the reader the 
routine checks. Let us consider 
a locally free sheaf $\cF_T \subset \cO_T^{2|2}$ of rank $1|1$ in $P_\Pi(T)$, 
invariant under $\phi$. We want to associate to each such $\cF_T$
a $T$-point of $\bP^{1|1}_\Pi(T)$, that is, a morphism $f:T \lra\bP^{1|1}_\Pi$. 

\smallskip\noindent
First we cover $T$ with $V_i$, so that $\cF_T|_{V_i}$ is free. 
By Lemma \ref{basis} 
there exists a basis $e$, $\ep$ of $\cF_T(V_i)$ such that 
$\phi(e) = \ep$, $\phi(\ep) = e$. 

\smallskip\noindent
Representing $e$, $\ep$ using the basis 
$\{e_0, \ep_0, e_1, \ep_1\}$ of $\C^{2|2}$, we have:
$$
\cF_T(V_i)=\rspan\left\{e\,=\,(s_0,\sigma_0,s_1,\sigma_1), 
\, \ep\,=\,(\sigma_0, s_0, \sigma_1, s_1) \right\}
$$
for some sections $s_j, \sigma_j$ in $\cO_T(V_i)$. Since the rank of $\cF_T$ is 
$1|1$, either $s_0$ or $s_1$ must
be invertible. Let us call $V_0$ the union of the $V_i$ for which
$s_0$ is invertible and $V_1$ the union of the $V_i$ for which $s_1$ is
invertible. 

\smallskip\noindent
Hence, we can make a change of basis of $\cF_T(V_i)$ by right
multiplying the column vectors representing $e$ and $\ep$ 
in the given basis, by a suitable element 
$g_i \in \rGL(1|1)(\cO_T(V_i))$
obtaining: 
$$
\begin{array}{c}
\begin{array}{rl}
\cF_T(V_0)=\rspan &\{(1,0,s_1s_0^{-1}-\sigma_1\sigma_0s_0^{-2}, \,
\sigma_1s_0^{-1}-s_1 \sigma_0 s_0^{-2}),\\ \\
&(0,1, \sigma_1s_0^{-1}-s_1\sigma_0s_0^{-2}, \,
s_1s_0^{-1}-\sigma_1\sigma_0s_0^{-2})\},\\ \\ \\
\cF_T(V_1)=\rspan 
&\{(s_0s_1^{-1}-\sigma_0\sigma_1 s_1^{-2},\, \sigma_0 s_1^{-1} -s_0 
\sigma_1 s_1^{-2}, 1,0),\\ \\
&( \sigma_0 s_1^{-1} -s_0 \sigma_1 s_1^{-2}, \,
s_0s_1^{-1}-\sigma_0\sigma_1 s_1^{-2}, 0,1)\}, \\
\end{array} \\ \\
g_0=
\begin{pmatrix}s_0^{-1} & -\sigma_0s_0^{-2} \\ 
-\sigma_0s_0^{-2}   & s_0^{-1} \end{pmatrix},
\qquad g_1=
\begin{pmatrix}s_1^{-1} & -\sigma_1s_1^{-2} \\
-\sigma_1s_1^{-2}   & s_1^{-1} \end{pmatrix} 
\end{array}
$$

Suppose now $\{e', \ep'\} := 
\{(s'_0,\sigma'_0,s'_1,\sigma'_1), (\sigma'_0, s'_0, \sigma'_1, s'_1)\}$ 
is another basis of $\mathcal{F}_T(V_i)$ such that 
$\phi(e') = \ep', \phi(\ep') = e'$. The sections in
$\cO_T(V_i)$ we have obtained, namely:
$$
\begin{array}{cc}
v_0\,=\,s_1s_0^{-1}-\sigma_1\sigma_0s_0^{-2}, & 
\nu_0\,=\,\sigma_1s_0^{-1}-s_1\sigma_0s_0^{-2} \\ \\
v_1\,=\,s_0s_1^{-1}-\sigma_0\sigma_1 s_1^{-2}, & 
\nu_1 \,=\,\sigma_0 s_1^{-1} -s_0 
\sigma_1 s_1^{-2} 
\end{array}
$$
are independent of the choice of such a basis. This can be easily
seen with an argument very similar to the one in Prop. \ref{fopt-proj-prop}.

\smallskip\noindent
Hence we have well-defined morphisms of supermanifolds 
$f_i:\cV_i \lra \cU_i \subset \bP^{1|1}_\Pi$: 
$$
\begin{array}{cc}
f_0^*(u_0)=v_0, &  f_0^*(\xi_0)=\nu_0 \\ \\
f_0^*(u_1)=v_1, &  f_0^*(\xi_1)=\nu_1  
\end{array}
$$
where $\cV_i=(V_i, \cO_T|_{V_i})$ and $\cU_i=(U_i, \cO_{\bP^{1|1}_\Pi}|_{U_i})$,
while $(u_i,\xi_i)$ are global coordinates on $\cU_i \cong \C^{1|1}$.
A small calculation shows that the $f_i$'s agree on $\cV_0  \cap \cV_1$,
in fact as one can readily check: 
$$
(1,0,v_0,\nu_0) \sim \left(\frac{1}{v_0},\frac{-\nu_0}{v_0^{2}},1,0\right)
$$
and similarly for $(v_1,\nu_1,1,0)$, 
which corresponds to the transition map for $\bP^{1|1}_\Pi$
we defined in Sec. \ref{prelim-sec}.
So the $f_i$'s glue to give a morphism $f: T \lra  \bP^{1|1}_\Pi$.

\medskip

For the converse, consider $f:T \lra  \bP^{1|1}_\Pi$ and define 
$V_i = |f|^{-1}(U_i)$. We can define immediately the sheaves
$\cF_{V_i} \subset \cO^{2|2}_{V_i}$ on each of the $V_i$ as we did in the proof 
of Prop.
\ref{fopt-proj-prop}: 

\begin{align*}
\cF_{V_0} &= \text{span }\{(1, 0, t_0, \tau_0), (0, 1, \tau_0,t_0)\}\\
\cF_{V_1} &= \text{span }\{(t_1, \tau_1, 1, 0), (\tau_1, t_1, 0, 1)\}\\
\end{align*}

\noindent where $t_i = f^*(u_i), \tau_i = f^*(\xi_i)$. The $\cF_{V_i}$ so defined are free of rank $1|1$ (by inspection there are no nontrivial relations between the generators), and $\phi$-invariant by construction. Finally, one checks that the relations $t_1 = t_0^{-1}, \tau_1 = -t_0^{-2}\tau_0$ imply that the $\cF_{V_i}$ glue on $V_0 \cap V_1$ to give a locally free rank $1|1$ subsheaf of $\cO_T^{2|2}$.
\end{proof}

\section{Super Riemann surfaces} \label{susyc-sec}

In this section we give the definition of super Riemann surface
and we examine some elementary, yet important properties.

\smallskip\noindent
Much of the material we discuss in this section is contained, though not so explicitly,
in \cite{ma2}.

\begin{definition}
A $1|1$-\textit{super Riemann surface} is a pair $(X, \cD)$, where $X$ is a 
$1|1$-dimensional complex supermanifold, and $\cD$ is a locally direct 
(and consequently locally free, by the super Nakayama's lemma) rank $0|1$ 
subsheaf of the tangent sheaf $TX$ such that:
$$
\begin{array}{cccc}
\cD{\otimes} \cD & \lra &TX/\cD& \\ \\
Y \otimes Z &\mapsto &[Y,Z] &(mod \, \cD)
\end{array}
$$
is an isomorphism of sheaves. Here $[ \, , \, ]$ denotes the super 
Lie bracket of vector fields. The distinguished subsheaf $\cD$ is called 
a \textit{SUSY-1 structure} on $X$, and $1|1$-super Riemann surfaces are 
thus alternatively referred to as \textit{SUSY-1 curves}. 
We shall refer to SUSY 1-structures simply as SUSY structures. 

\smallskip\noindent
We say that $X$ has genus $g$ if the underlying topological
space $|X|$ has genus $g$.
\end{definition}

\medskip

\begin{definition} 
Let $(X, \mathcal{D})$, $(X', \mathcal{D}')$ be SUSY-1 curves, 
and $F: X \to X'$ a biholomorphic map of supermanifolds. 
$F$ is a {\it isomorphism of SUSY curves}, or simply a 
{\it SUSY isomorphism}, if $(dF)_p(\cD_p) = \cD'_{|F|(p)}$ for all reduced points 
$p \in |X|$. Here $(dF)_p$ denotes the differential of $F$ at $p$, 
$\cD_p \subset T_pX$ (resp. $\cD'_q$) the stalk of the subsheaf 
$\mathcal{D}$ (resp. $\mathcal{D}'$) at $p$ (resp. $q$).
\end{definition}

\begin{example} \label{examplesusy}
Let us consider the supermanifold $\C^{1|1}$, with
global coordinates $z$, $\zeta$ together with the
odd vector field:
$$
V=\partial_\zeta+\zeta \partial_z
$$\\
If $\cD=\rspan\{V\}$, $\cD$ is a SUSY structure on $\C^{1|1}$ since
$V$, $V^2$ span $T\C^{1|1}$. As we will see, this is the {\sl unique}
(up to SUSY isomorphism) SUSY structure on $\C^{1|1}$.
\end{example}


We now want to relate the SUSY structures 
on a supermanifold and the canonical bundle of the reduced underlying 
manifold. It is important to remember that for a supermanifold $X$ of 
dimension $1|1$, $\cO_{X,0} = \cO_{X,\red}$; that is, the even part 
of its structural sheaf coincides with its reduced part. 
This is of course not true for a generic supermanifold.

\smallskip\noindent
We start by showing that any SUSY structure can be locally put into a {\sl canonical form}.

\begin{lemma} \label{localsusy-lemma}
Let $(X, \cD)$ be a SUSY-1 curve, $p$ a topological point in $X_\red$. 
Then there exists an open set $U$ containing $p$ and a coordinate system 
$W = (w, \eta)$ for $U$ such that $\mathcal{D}|_U =$ span $\{\partial_\eta + \eta \partial_w\}$.
\end{lemma}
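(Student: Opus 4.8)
The plan is to start from an arbitrary coordinate system and use the SUSY condition to find the change of coordinates putting $\cD$ into canonical form. Since $\cD$ is locally free of rank $0|1$, near $p$ we may choose coordinates $(z,\zeta)$ on some open $U$ and a local generator $D$ of $\cD|_U$. Writing $D = f\partial_\zeta + g\partial_z$ for even $g$ and odd... actually, since $D$ is an odd vector field, $f \in \cO_X(U)_0$ and $g \in \cO_X(U)_1$. The SUSY condition says $D$ and $D^2 = \tfrac12[D,D]$ must span $TX|_U$; computing $D^2$ and examining the reduced part, one sees that $f$ cannot vanish at $p$ (otherwise $D$ and $D^2$ would fail to span at $p$), so after shrinking $U$ we may assume $f$ is invertible. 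Replacing $D$ by $f^{-1}D$, which does not change $\cD$, we may assume $D = \partial_\zeta + h\partial_z$ for some odd section $h = h(z) + \zeta\, k(z)$ wait — $h$ odd means $h = \zeta\, a(z)$ for $a$ even, plus an odd function of $z$ alone; I will write $h = \beta(z) + \zeta\, a(z)$ with $\beta$ odd, $a$ even. The SUSY condition $D^2 = a\,\partial_z + (\text{lower order})$ being a generator of $TX/\cD$ forces $a$ to be invertible near $p$.

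Next I would look for new coordinates $(w,\eta)$ with $w = w(z) + \zeta\,\gamma(z)$ and $\eta = \eta(z,\zeta) = c(z)\zeta + \delta(z)$, and impose that $D = \lambda\,(\partial_\eta + \eta\,\partial_w)$ for some invertible even function $\lambda$; equivalently $D(\eta) = \lambda$, $D(w) = \lambda\eta$, and $D(\eta)^2$-type compatibility. Actually the cleanest route: the pair $(w,\eta)$ is a valid coordinate system and $\partial_\eta + \eta\partial_w$ generates $\cD$ in these coordinates iff $Dw = (D\eta)\cdot\eta$ and $D\eta$ is invertible (this is exactly the statement that, after rescaling $D$ by the unit $D\eta$, it has the stated canonical form; one checks $(\partial_\eta + \eta\partial_w)\eta = 1$ and $(\partial_\eta+\eta\partial_w)w = \eta$). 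So the task reduces to solving the system $D\eta = $ unit, $Dw = (D\eta)\,\eta$ for unknown even/odd functions, which is a triangular system of ODEs in $z$ together with purely algebraic manipulations in the nilpotent $\zeta$. I would solve it order by order in $\zeta$: first pick $\eta$ with $D\eta$ a unit (using that $a$ is invertible one can solve for the $\zeta$-coefficient of $\eta$, then for its $\zeta$-independent odd part), then solve the first-order linear ODE $D w = (D\eta)\eta$ for $w$, checking that the constant term of $Dw$ along $w$ can be integrated — this uses $\cO_{X,0} = \cO_{X,\red}$ so the even equations are genuine holomorphic ODEs with local solutions.

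I would organize the computation by expanding every section as $(\text{even or odd function of }z) + \zeta\cdot(\text{the opposite parity function of }z)$, turning the supergeometric problem into a small collection of scalar holomorphic ODEs on a neighborhood of $|p|$ in the reduced curve; existence of local holomorphic solutions (with prescribed nonvanishing) is classical, and shrinking $U$ as needed handles the invertibility requirements. Finally I would verify that with the $(w,\eta)$ so constructed, $(w,\eta)$ is indeed a coordinate system (the Jacobian of the change of coordinates is invertible at $p$ — this is where the nonvanishing conditions on $a$ and on $D\eta$ are used) and that $\partial_\eta + \eta\partial_w$ generates $\cD|_U$, which follows because it equals $(D\eta)^{-1}D$ up to the unit, hence spans the same rank $0|1$ subsheaf.

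The main obstacle I expect is bookkeeping: correctly extracting from the SUSY isomorphism condition the precise nonvanishing requirement (that the relevant coefficient is a unit at $p$) and then solving the coupled even/odd ODE system in the right order so that each step's solvability is guaranteed — in particular making sure the equation for $w$ is consistent given the choice of $\eta$. None of the individual steps is deep, but the parity bookkeeping and the need to shrink $U$ repeatedly to maintain invertibility is where care is required.
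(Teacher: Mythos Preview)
Your approach is correct and is essentially the paper's: find new coordinates $(w,\eta)$ in which the generator of $\cD$ takes canonical form by solving the resulting equations. However, you are not exploiting the key simplification that the odd dimension is $1$. On a $1|1$ supermanifold with coordinates $(z,\zeta)$ there are \emph{no} odd functions of $z$ alone, so every odd section is of the form $\zeta\cdot g(z)$ and every even section is simply a holomorphic function $f(z)$. Consequently your $\beta$, $\gamma$, $\delta$ all vanish automatically: the generator is just $D = f(z)\,\partial_\zeta + g(z)\,\zeta\,\partial_z$ with $f,g$ holomorphic, and the only admissible coordinate change is $w = w(z)$, $\eta = h(z)\zeta$. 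The SUSY condition forces both $f$ and $g$ to be units, and imposing $D = \partial_\eta + \eta\,\partial_w$ yields only the two scalar equations $fh = 1$ and $gw' = h$, solved by taking $h = f^{-1}$ and integrating $w' = (fg)^{-1}$ on a simply connected neighborhood. The parity bookkeeping and coupled ODE system you anticipate as the main obstacle dissolve entirely once this observation is made.
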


\begin{proof}
Since $\mathcal{D}$ is locally free, there exists a neighborhood $U$ of $p$ on which $\mathcal{D} =$ span $\{D\}$, where $D$ is some odd vector field; by shrinking $U$, we may assume it is also a coordinate domain with coordinates $(z, \zeta)$. Since $X$ has only one odd coordinate, we have $D = f(z) \partial_\zeta+ g(z)\zeta \partial_z$ 
for some holomorphic even functions $f$, $g$.
So:
$$
D^2=[D,D]/2=g(z) \partial_z +g f' \zeta  \partial_\zeta.
$$
Since $D$, $D^2$ form a free local basis for the $\cO_X$-module $TX$, we have
$$
a_{11}D + a_{12}D^2 = \partial_z, \qquad 
a_{21}D + a_{22}D^2 = \partial_\zeta
$$
If we substitute the expression for $D$ and $D^2$ we obtain:
$$
g(a_{11} \zeta + a_{12}) = 1, \qquad  a_{21} f = 1 - a_{22} gf'\zeta
$$
from which we conclude that both $f$ and $g$ must be units.

\smallskip\noindent
We now show that we can find a new coordinate system (possibly shrinking $U$) so that $D$ can be
put in the desired form. We will assume such a coordinate system exists, then
determine a formula for it and this formula will give us the existence.
Let $w = w(z)$, $\eta = h(z) \zeta$ be the new coordinate system, where 
$w$ and $h$ are holomorphic functions. By the
chain rule, we have:
$$
\partial_z=w'(z)\partial_w+h'(z) \zeta \partial_\eta, \qquad
\partial_\zeta=h(z)\partial_\eta
$$
We now set $D=\partial_\eta+\eta \partial_w$ and substituting we have:
$$
D=\partial_\eta+\eta \partial_w=fh\partial_\eta  + gh^{-1}\eta w'(z)
\partial_w
$$
which holds if and only if the system of equations:
\begin{align*}
fh=1,\qquad
gw'=h
\end{align*}
has a solution for $w, h$. By shrinking our original coordinate domain, we may assume it is simply connected. Then since $f$ and $g$ are units, the system has a solution, by standard facts from complex analysis. We leave to the reader the easy check that $(w,\eta)$ is indeed a coordinate system.
\end{proof}

\begin{definition}Let $(X, \mathcal{D})$ be a SUSY-1 curve, $U$ an open set. Any coordinate system $(w, \eta)$ on $U$ having the property of Lemma \ref{localsusy-lemma} is said to be {\it compatible} with the SUSY structure $\cD$. Any open set $U$ that admits a $\mathcal{D}$-compatible coordinate system is said to be {\it compatible} with $\mathcal{D}$.
\end{definition}

\begin{definition}
Let $X_\red$ be an ordinary Riemann surface, $K_{X_\red}$ its canonical bundle. 
A \textit{theta characteristic} is a pair $(\cL, \alpha)$, where $\cL$ is a holomorphic line bundle on $X_\red$, and $\alpha$ a holomorphic isomorphism of line bundles $\al: \cL \otimes \cL \lra K_{X_\red}$. An {\it isomorphism} of theta characteristics $(L, \alpha)$, $(L', \alpha')$ is an isomorphism $\phi: L \to L'$ of line bundles such that $\alpha' \circ \phi^{\otimes 2} = \alpha$. 

\smallskip\noindent
Some authors also call a theta characteristic of $X_\red$ a \textit{square root} of the canonical bundle $K_{X_\red}$.
\end{definition}

\begin{definition}
A {\it super Riemann pair}, or {\it SUSY pair} for short, is a pair $(X_r, (\mathcal{L}, \alpha))$ where $X_r$ is an ordinary Riemann surface, and $(\mathcal{L}, \alpha)$ is a theta characteristic on $X_r$. An {\it isomorphism} of SUSY pairs $F: (X_r, (\mathcal{L}, \alpha)) \to (X'_r, (\mathcal{L}', \alpha'))$ is a pair $(f, \phi)$ where $f: X_r \to X'_r$ is a biholomorphism of ordinary Riemann surfaces, and $\phi: \mathcal{L}' \to f_*(\mathcal{L})$ is an isomorphism of theta characteristics on $X'_r$. 
\end{definition}

For the sake of brevity, we will occasionally omit writing the isomorphism $\alpha$ in describing a super Riemann pair. The following theorem shows that the data of super Riemann surface and of super Riemann pair are completely equivalent.

\begin{theorem} \label{catequivalence-thm}
Let $(X, \mathcal{D})$ be a SUSY-1 curve. Then $(X_{\red}, \cO_{X, 1})$ is a SUSY pair, where $\cO_{X,1}$ is regarded as an $\cO_{X, 0}$-line bundle. 
Furthermore, if $F := (f, f^\#): (X, \mathcal{D}) \to (X', \mathcal{D}')$ is a SUSY-isomorphism, then $(f, f^\#|_{\mathcal{O}_{X,1}}): (X_{\red}, \mathcal{O}_{X,1}) \to (X'_{\red}, \mathcal{O}_{X', 1})$ is an isomorphism of SUSY pairs. 

Conversely, suppose $(X_{r}, (\cL, \alpha))$ is a SUSY pair. Then there exists a structure of SUSY-1 curve $(X_\mathcal{L}, \mathcal{D}_{\cL})$ on $X_r$, such that the SUSY pair associated to $(X_{\cL}, \mathcal{D})$ equals $(X_{r}, (\cL, \alpha))$. Any isomorphism of SUSY pairs $(X_{r}, (\cL, \alpha)) \to (X'_{r}, (\cL', \alpha'))$ induces a SUSY-isomorphism $X_{\mathcal{L}} \to X_{\mathcal{L}'}$.
\end{theorem}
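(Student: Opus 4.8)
The plan is to reduce the whole statement to the local normal form of Lemma~\ref{localsusy-lemma} together with a classification of the transition maps between two such normal forms. I would begin by recording the key preliminary observation that, since $X$ has dimension $1|1$, one has $\cO_{X,0}=\cO_{X,\red}$, so the even part of the structure sheaf has no nilpotents; consequently every $1|1$ supermanifold is split and determined by the pair $(X_\red,\cO_{X,1})$, where $\cO_{X,1}$ is an odd line bundle of square zero over $\cO_{X,\red}$. For the same reason any two coordinate systems $(w,\eta)$, $(w',\eta')$ on an open $U\subset X$ must be related by $w'=f(w)$ with $f$ biholomorphic and $\eta'=a(w)\,\eta$ with $a$ a unit --- no odd corrections to $w'$ are possible. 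A chain-rule computation of exactly the type already carried out inside the proof of Lemma~\ref{localsusy-lemma} then gives $\partial_{\eta'}+\eta'\partial_{w'}=a^{-1}(\partial_\eta+\eta\partial_w)$ modulo terms that vanish because $\eta^2=0$; hence, when both coordinate systems are $\cD$-compatible, the equality $\rspan\{\partial_\eta+\eta\partial_w\}=\rspan\{\partial_{\eta'}+\eta'\partial_{w'}\}$ is \emph{equivalent} to the single relation $a^2=f'$. In short, the $\cD$-compatible coordinate changes are exactly $w'=f(w)$, $\eta'=\pm\sqrt{f'(w)}\,\eta$; and no global choice of square root has to be made, since the local odd coordinates $\eta_i$ are genuine coordinates whose transition functions $a_{ij}$ automatically satisfy the cocycle condition.

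With this in hand the direct implication becomes immediate. I would cover $X$ by $\cD$-compatible charts $(U_i,(w_i,\eta_i))$ (Lemma~\ref{localsusy-lemma}), refining so that the $U_i$ are simply connected. On $U_i$ the sheaf $\cO_{X,1}$ is generated by $\eta_i$ with transition functions $a_{ij}$, while $K_{X_\red}$ is generated by $dw_i$ with transition functions $f'_{ij}$, and $a_{ij}^2=f'_{ij}$ by the above. Hence the local assignments $\eta_i\otimes\eta_i\mapsto dw_i$ agree on overlaps and glue to an isomorphism $\al\colon\cO_{X,1}^{\otimes2}\lra K_{X_\red}$, exhibiting $(X_\red,(\cO_{X,1},\al))$ as a SUSY pair. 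For a SUSY isomorphism $F=(f,f^\#)\colon(X,\cD)\to(X',\cD')$, I would first argue that $(dF)_p(\cD_p)=\cD'_{|F|(p)}$ together with the local freeness of $\cD$ forces $F$ to carry $\cD$-compatible charts to $\cD'$-compatible ones, hence to have the local form $(w,\eta)\mapsto(f(w),\pm\sqrt{f'}\,\eta)$; reading off the odd component of $f^\#$ in such coordinates then shows at once that it intertwines $\al'$ with $\al$, i.e.\ it is an isomorphism of theta characteristics, so $F$ induces an isomorphism of SUSY pairs.

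For the converse, starting from a SUSY pair $(X_r,(\cL,\al))$, the rigidity remark tells us the underlying supermanifold must be the split one $X_\cL:=(X_r,\cO_{X_r}\oplus\cL)$, with $\cL$ odd of square zero. To put a SUSY structure on it I would choose a cover $\{U_i\}$ by simply connected charts with coordinates $w_i$ on $U_i$ and trivialisations $\theta_i$ of $\cL|_{U_i}$ normalised so that $\al(\theta_i\otimes\theta_i)=dw_i$; the transition functions $g_{ij}$ of $\cL$ then satisfy $g_{ij}^2=f'_{ij}$. Setting $D_i:=\partial_{\theta_i}+\theta_i\partial_{w_i}$ on $U_i$, the relation $g_{ij}^2=f'_{ij}$ is, by the computation of the first paragraph, exactly what makes $\rspan\{D_i\}$ and $\rspan\{D_j\}$ coincide on $U_i\cap U_j$, so these glue to a rank $0|1$ subsheaf $\cD_\cL\subset TX_\cL$, which is a SUSY structure because $D_i,D_i^2$ form a local frame of $TX_\cL$ exactly as in Example~\ref{examplesusy}. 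By construction $\cO_{X_\cL,1}=\cL$, and the theta characteristic induced on $X_r$ by $\cD_\cL$ through the direct implication is $\theta_i\otimes\theta_i\mapsto dw_i=\al(\theta_i\otimes\theta_i)$, that is, $\al$ itself; so the SUSY pair associated with $(X_\cL,\cD_\cL)$ is $(X_r,(\cL,\al))$. For an isomorphism of SUSY pairs $(f,\phi)\colon(X_r,(\cL,\al))\to(X'_r,(\cL',\al'))$ I would take $F\colon X_\cL\to X_{\cL'}$ to be the morphism whose pullback is $f^*$ on even sections and $\phi$ on odd sections; it is a biholomorphism of supermanifolds, and the condition $\al'\circ\phi^{\otimes2}=\al$ translates, in the normalised trivialisations, into $F$ having the superconformal local form $(w,\eta)\mapsto(f(w),\pm\sqrt{f'}\,\eta)$, so $F$ is a SUSY isomorphism. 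A final check that the two assignments are mutually inverse on isomorphism classes and compatible with composition completes the argument.

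The hard part is entirely in the first paragraph: noticing that in dimension $1|1$ the coordinate changes are completely rigid, and performing the chain-rule computation that identifies $\cD$-compatibility with the single equation $a^2=f'$ --- the statement that ``$\eta$ is a square root of $dw$''. Once that is established, the two constructions above are visibly inverse to one another, the SUSY axioms and the functoriality statements reduce to the local computations already present in Lemma~\ref{localsusy-lemma} and Example~\ref{examplesusy}, and everything else is routine cocycle bookkeeping and sheaf gluing, which I would leave to the reader in the style of Propositions~\ref{fopt-proj-prop} and \ref{fopts-pi}.
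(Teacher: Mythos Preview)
Your proposal is correct and follows essentially the same approach as the paper: both arguments hinge on Lemma~\ref{localsusy-lemma} together with the computation that two $\cD$-compatible charts $(w,\eta)$, $(w',\eta')$ are related by $w'=f(w)$, $\eta'=a(w)\eta$ with $a^2=f'$, and both build the converse by forming the split supermanifold $\cO_{X_r}\oplus\cL$ and using trivialisations normalised so that $\alpha(\theta_i\otimes\theta_i)=dw_i$. The only difference is organisational---you isolate the transition-function computation once at the outset and invoke it in both directions, whereas the paper carries it out separately in each half of the proof---so the two are really the same argument.
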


\begin{proof} 
First we show that if $X$ is a $1|1$-complex supermanifold with a SUSY structure, then the $\cO_{X,0}$-line-bundle $\cO_{X,1}$ is a square root of the canonical bundle $K_{X_{\red}}$. 

\smallskip\noindent
By Lemma \ref{localsusy-lemma}, $X$ has an open cover by compatible coordinate charts. If $(z,\zeta)$ and $(w,\eta)$ are two 
such coordinate charts, then 
$$
D_z=\partial_\zeta+\zeta\partial_z, \qquad
D_w=\partial_\eta+\eta\partial_w 
$$ 
with $D_z=h(z)D_w$, $h(z) \neq 0$. If $w=f(z)$ and $\eta=g(z)\zeta$
a small calculation implies that $f'(z)=g^2$, that is, $\mathcal{O}_{X,1}^{\otimes 2}$
and $K_{X_\red}$ have the same transition functions for this covering, hence there is an isomorphism $\mathcal{O}_{X, 1}^{\otimes 2} \to K_{X_\red}$. If $F := (f, f^\#): (X, \mathcal{D}) \to (X', \mathcal{D}')$ is a SUSY-isomorphism of SUSY-1 curves with underlying Riemann surface $X_{\red}$, then one checks that $f^\#|_{\mathcal{O}_{X',1}}: \mathcal{O}_{X', 1} \to f_*(\mathcal{O}_{X, 1})$ is an isomorphism of line bundles. Covering $X$ with an atlas of compatible coordinate charts, transferring this atlas to a compatible atlas on $X'$ by $F$, and comparing the transition functions for $f_*(\mathcal{O}_{X,1})$ and $\mathcal{O}_{X',1}$ in this atlas as above, we obtain the desired isomorphism of theta characteristics.
\smallskip

Conversely, if we have a theta characteristic 
$\alpha: \mathcal{L}^{\otimes 2} \to K_{X_{\red}}$, we define a sheaf of supercommutative 
rings $\mathcal{O}_{X_{\mathcal{L}}}$ on $|X|$, the topological space underlying $X_\red$, 
by setting:
\begin{align*}
\mathcal{O}_{X_{\mathcal{L}}} = \mathcal{O}_{X_\red} \oplus \mathcal{L}
\end{align*}
with multiplication $(f, s) \cdot (g, t) = (fg, ft + gs)$. One checks that 
$\mathcal{O}_{X_\mathcal{L}}$ so defined is a sheaf of local supercommutative rings, using the standard fact 
that a supercommutative ring $A$ is local if and only if its even part $A_0$ is local. 
By taking a local basis $\chi$ of $\mathcal{L}$ in a trivialization, and sending 
$(f, g\chi)$ to $f + g \eta$, we see that $\mathcal{O}_{X_{\mathcal{L}}}$ so defined is 
locally isomorphic to $\mathcal{O}_{X_\red} \otimes \Lambda[\eta]$ and 
hence $(X_\red, \mathcal{O}_{X_{\mathcal{L}}})$ is a supermanifold.

\smallskip\noindent
The SUSY structure is defined as follows. Let $z$ be a coordinate for $X_\red$ on 
an open set $U$. By shrinking $U$ we may assume $\mathcal{O}_\cL(U)$ is free. 
Then there is some basis $\zeta$ of $\cO_{\mathcal{L}}(U)$ such that 
$\alpha(\zeta \otimes \zeta) = dz$; then $z, \zeta$ so defined are coordinates 
for $X_\cL$ on $U$. We set the SUSY structure on $U$ to be that spanned by 
$D_Z := \partial_\zeta + \zeta \partial_z$.

We will show the local SUSY structure thus defined is independent of our choices 
and hence is global on $X$. Suppose $w$ is another coordinate on $U$, and $\eta$ 
a basis of $\cO_{\mathcal{L}}(U)$ such that $\alpha(\eta \otimes \eta) = dw$. 
Then $w = f(z), \eta = g(z) \zeta$, with $f'$ a unit in $U$. Since $dw = f'(z) \, dz$, 
we have:
\begin{align*}
\alpha(\eta \otimes \eta) &= g^2 \alpha(\zeta \otimes \zeta)\\
&=f'(z) dz
\end{align*}
from which it follows that $g^2 = f'$; in particular, $g$ is also a unit. 
Then by the chain rule, $\partial_\zeta + \zeta \partial_z = g(\partial_\eta + \eta \partial_w)$,
hence $D_Z$ and $D_W$ span the same SUSY structure on $U$.

\smallskip\noindent
Now suppose $(X'_r, \mathcal{L}')$ is another SUSY pair, isomorphic to $(X, \mathcal{L})$ by $(f, \phi)$. Then $\phi$ will induce an isomorphism of analytic supermanifolds
$\psi: X_{\cL} \lra X_{\cL'}$, since $f: X_{\cL,\red} \to X_{\cL', \red}$ is an isomorphism, and 
$f_*(\cO_{X_\cL,1}) \cong \cO_{X_{\cL'},1}$ via the isomorphism
$\phi$ of theta characteristics.
Now we check we have a SUSY isomorphism. This may be done locally: given a point $p \in X_\red$ 
one chooses coordinates $(z, \zeta)$ and $(z', \zeta')$ around $p$ that are compatible 
with the SUSY-1 structures on $X_\cL$ and $X_{\cL'}$, so that 
$D_Z := \partial_\zeta + \zeta \partial_z$ 
(resp. $D_{Z'} := \partial_{\zeta'} + \zeta' \partial_{z'}$) locally generate the SUSY 
structures. In these coordinates the reader may check readily that 
$d{\psi}(D_Z|_p) = D_{Z'}|_p$. 
\end{proof}

Theorem \ref{catequivalence-thm} has the following important
immediate consequence.

\begin{corollary} \label{cateq-cor}
A $1$-dimensional complex manifold $X_\red$ carries a SUSY structure 
if and only if $X$ admits a theta characteristic.
\end{corollary}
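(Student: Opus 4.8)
The plan is to derive Corollary~\ref{cateq-cor} as an essentially formal consequence of Theorem~\ref{catequivalence-thm}. Recall that the corollary asserts: a $1$-dimensional complex manifold $X_\red$ carries a SUSY structure (meaning $X_\red$ is the reduced manifold of a supermanifold $X$ equipped with a SUSY-1 structure $\cD$) if and only if $X_\red$ admits a theta characteristic, i.e.\ a square root of its canonical bundle $K_{X_\red}$.

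First I would prove the forward direction. Suppose $X_\red$ carries a SUSY structure; that is, there exists a $1|1$-dimensional complex supermanifold $X$ with $X_\red$ as its reduced manifold, together with a SUSY-1 structure $\cD$ on $X$. By the first part of Theorem~\ref{catequivalence-thm}, the pair $(X_\red, \cO_{X,1})$ is a SUSY pair, where $\cO_{X,1}$ is viewed as an $\cO_{X,0}$-line bundle on $X_\red$. By the definition of SUSY pair, this means precisely that $\cO_{X,1}$ (together with the isomorphism $\alpha$ produced in the proof of the theorem) is a theta characteristic on $X_\red$. Hence $X_\red$ admits a theta characteristic.

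For the converse, suppose $X_\red$ admits a theta characteristic $(\cL, \alpha)$ with $\alpha : \cL^{\otimes 2} \to K_{X_\red}$. Then $(X_\red, (\cL, \alpha))$ is a SUSY pair in the sense of the definition. By the converse part of Theorem~\ref{catequivalence-thm}, there exists a SUSY-1 curve structure $(X_\cL, \cD_\cL)$ whose underlying reduced manifold is $X_\red$ (indeed the construction in the proof takes the topological space underlying $X_\red$ as the topological space of $X_\cL$, and $\cO_{X_\cL} = \cO_{X_\red} \oplus \cL$, so $(X_\cL)_\red = X_\red$). Thus $X_\red$ carries a SUSY structure.

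I do not anticipate a serious obstacle here: the corollary is a direct translation of the two implications packaged in Theorem~\ref{catequivalence-thm} into the language of existence statements, and the only point requiring a moment's care is to observe in the converse direction that the reduced manifold of the constructed supermanifold $X_\cL$ is literally $X_\red$ (which is immediate from the formula $\cO_{X_\cL} = \cO_{X_\red} \oplus \cL$, since quotienting by the nilpotents — here the $\cL$ summand, which squares to zero — returns $\cO_{X_\red}$). Both directions are therefore short.
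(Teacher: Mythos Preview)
Your proposal is correct and matches the paper's approach: the paper states only that the corollary is an ``immediate consequence'' of Theorem~\ref{catequivalence-thm}, and what you have written is precisely that immediate deduction spelled out in full.
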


\begin{remark} 
One can prove, via a direct argument using cocycles, that any 
compact Riemann surface $S$ admits a theta characteristic, using 
the fact that the Chern class is $c_1(K_S) = 2 - 2g$ (i.e. it
is divisible by $2$ hence $K_S$ admits a square root). 
Hence SUSY-1 curves exist in abundance: any compact Riemann surface 
admits at least one structure of SUSY-1 curve.
%
\end{remark}

\smallskip\noindent
Theorem \ref{catequivalence-thm} has also the following important consequences:

\begin{proposition}\label{susyc1|1-prop}
Up to SUSY-isomorphism, there is a unique SUSY-1 structure 
on $\mathbb{C}^{1|1}$, namely, that defined by the odd vector field:

\begin{align*}
V = \partial_\zeta + \zeta \partial_z
\end{align*}

\noindent where $(z, \zeta)$ are the standard linear coordinates 
on $\mathbb{C}^{1|1}$. 
\end{proposition}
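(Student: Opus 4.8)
The plan is to deduce this from Theorem~\ref{catequivalence-thm} together with a classification of theta characteristics on the relevant reduced curve. Since $\C^{1|1}$ has reduced space $X_\red = \C$, and $\C^{1|1}$ is (as a complex supermanifold) the split supermanifold built from the trivial line bundle $\cO_\C$, the structure sheaf is $\cO_\C \otimes \Lambda[\zeta]$, so that $\cO_{X,1}$ is the trivial line bundle $\cO_\C$. Thus any SUSY-1 structure on $\C^{1|1}$ gives, via the theorem, a theta characteristic $(\cO_\C, \alpha)$ where $\alpha: \cO_\C \otimes \cO_\C \to K_\C$ is an isomorphism. The canonical bundle $K_\C$ is itself trivial (generated by $dz$), so such an $\alpha$ exists, and any two differ by a nowhere-vanishing holomorphic function on $\C$; since $\C$ is simply connected, any such function has a holomorphic square root, which precisely realizes an isomorphism of theta characteristics $(\cO_\C, \alpha) \cong (\cO_\C, \alpha')$. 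Hence there is, up to isomorphism, a unique theta characteristic on $\C$, namely $(\cO_\C, dz)$.

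Next I would invoke the second half of Theorem~\ref{catequivalence-thm}: since isomorphic SUSY pairs induce SUSY-isomorphic SUSY-1 curves, and since \emph{every} SUSY-1 structure on $\C^{1|1}$ yields (via the first half of the theorem) a SUSY pair on $\C$ which must be isomorphic to $(\C, (\cO_\C, dz))$, we conclude that all SUSY-1 structures on $\C^{1|1}$ are mutually SUSY-isomorphic. Finally, Example~\ref{examplesusy} exhibits the concrete structure $\cD = \rspan\{V\}$ with $V = \partial_\zeta + \zeta\partial_z$ as one such SUSY-1 structure; applying the construction $(X_r,(\cL,\alpha)) \mapsto (X_\cL, \cD_\cL)$ of the theorem to $(\C, (\cO_\C, dz))$ produces exactly this structure, since the basis $\zeta$ of $\cO_\C$ with $\alpha(\zeta\otimes\zeta) = dz$ gives $D_Z = \partial_\zeta + \zeta\partial_z$. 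Therefore every SUSY-1 structure on $\C^{1|1}$ is SUSY-isomorphic to the one spanned by $V$.

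One subtlety to address carefully: one must check that the supermanifold produced by the reverse construction of the theorem, starting from $(\C, (\cO_\C, dz))$, is genuinely $\C^{1|1}$ with its \emph{standard} complex structure (not merely some supermanifold with reduced space $\C$), so that "unique structure on $\C^{1|1}$" is the right statement rather than "unique up to a change of the ambient supermanifold". This is immediate because $\C$ admits a global coordinate $z$ and $\cO_\C$ is globally trivial, so the local model $\cO_{X_\red}\otimes\Lambda[\eta]$ is in fact global, giving $\C^{1|1}$ on the nose. The only real content beyond citing the theorem is the elementary complex-analysis fact that a nowhere-vanishing holomorphic function on the simply connected domain $\C$ has a holomorphic square root; this is what forces the uniqueness of the theta characteristic and hence of the SUSY structure. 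I expect this square-root step, together with the bookkeeping identifying $\cO_{\C^{1|1},1}$ with the trivial bundle and $K_\C$ with the trivial bundle, to be the main (though still routine) point; everything else is a direct appeal to Theorem~\ref{catequivalence-thm} and Example~\ref{examplesusy}.
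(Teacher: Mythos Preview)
Your proposal is correct and follows essentially the same approach as the paper: reduce via Theorem~\ref{catequivalence-thm} to the uniqueness of theta characteristics on $\C$, observe that $K_\C$ and $\cO_{X,1}$ are both trivial, and use that a nowhere-vanishing entire function admits an entire square root to conclude uniqueness, then cite Example~\ref{examplesusy} for the explicit representative. Your treatment is somewhat more careful than the paper's (you explicitly check that the reverse construction returns $\C^{1|1}$ on the nose), but the strategy is identical.
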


\begin{proof}
The reduced manifold of $\mathbb{C}^{1|1}$ is $\mathbb{C}$. It is well known that all holomorphic line bundles on $\mathbb{C}$ are trivial. 
This implies there exists only one theta characteristic for $\mathbb{C}$ up to isomorphism, namely $(\mathcal{O}_{\mathbb{C}}, \id^{\otimes 2})$. The essential point in verifying this uniqueness is that any automorphism of trivial line bundles on $\mathbb{C}$ is completely determined by an invertible entire function on $\mathbb{C}$, and such a function always has an invertible entire square root. Hence by
Theorem \ref{catequivalence-thm}, there is only one SUSY-1 structure on $\mathbb{C}$ up to isomorphism. For the last statement of the theorem, see Example \ref{examplesusy}.
\end{proof}

The next example shows that Lemma \ref{localsusy-lemma} is a purely local result.

\begin{example}
Consider the vector field:
$$
Z = \partial_\zeta + e^z \zeta \partial_z
$$
$Z$ is an odd vector field on $\mathbb{C}^{1|1}$ 
defining a SUSY structure on $\mathbb{C}^{1|1}$. 
The previous proposition implies that the SUSY structure 
defined by $Z$ is isomorphic to that defined by 
$V=\partial_\zeta + \zeta \partial_z$. 
However, it does {\it not} imply that there exists a global coordinate 
system $(w, \eta)$ for $\mathbb{C}^{1|1}$ in which $Z$ takes the 
form $\partial_\eta + \eta \partial_w$. In fact 
suppose such a global coordinate system 
$w = f(z), \eta = g(z) \zeta$ existed. Then:

\begin{align*}
\bigg(
\begin{array}{c|c}
f' & 0\\
\hline
g' \zeta & g
\end{array}
\bigg)
\bigg(
\begin{array}{c}
e^z \zeta\\
\hline
1
\end{array}
\bigg) = \bigg(
\begin{array}{c}
\eta\\
\hline
1
\end{array}
\bigg)
\end{align*}

\noindent from which we conclude that $g = 1, f'= e^{-z}$. 
Hence $f = -e^{-z} + c$, but since $f$ is not one-to-one, 
this contradicts the assumption that $(w, \eta)$ is a coordinate system 
on all of $\mathbb{C}^{1|1}$. This shows that Lemma \ref{localsusy-lemma}
cannot be globalized even in the simple case of $\mathbb{C}^{1|1}$, even though $\mathbb{C}^{1|1}$ has a unique SUSY structure up to SUSY isomorphism. 
\end{example}

\section{Super Riemann surfaces of genus zero and one} 
\label{genus1-sec}

In this section we want to provide some classification results
on SUSY curves of genus zero and one. 
The next proposition provides a complete classification of compact 
super Riemann surfaces of genus zero and shows the existence 
of a genus zero $1|1$ compact complex supermanifold,
namely the $\Pi$-projective line, that does 
not admit a SUSY structure.

\begin{proposition}
\begin{enumerate}
\item $\bP^{1|1}$ admits a unique SUSY structure, up to SUSY-isomorphism. More
generally, if $X$ is a supermanifold of dimension $1|1$ of
genus zero, then $X$ admits
a SUSY structure if and only if $X$ is isomorphic to $\bP^{1|1}$.\\
\item $\mathbb{P}^1_\Pi$ admits no SUSY structure.
\end{enumerate}
\end{proposition}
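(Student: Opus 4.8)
Part (1) will follow quickly from the machinery already developed. The topological space $\bP^1$ is a compact Riemann surface of genus zero, i.e. $\bP^1 = X_\red$ for any genus-zero $1|1$ supermanifold $X$ with $|X|$ connected. The canonical bundle of $\bP^1$ is $\cO(-2)$, which has a unique square root $\cO(-1)$ up to isomorphism (any two line bundles on $\bP^1$ of the same degree are isomorphic, and $\cO(-1)^{\otimes 2} \cong \cO(-2)$); moreover this theta characteristic is rigid because $H^0(\bP^1, \cO^\times)$ consists of constants and every nonzero constant has a square root. So by Corollary \ref{cateq-cor} and Theorem \ref{catequivalence-thm}, $\bP^1$ carries a SUSY structure, it is unique up to SUSY-isomorphism, and the associated supermanifold $X_\cL$ has $\cO_{X,1} = \cO(-1)$ and $\cO_{X,0} = \cO_{\bP^1}$. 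The plan is then to verify that this $X_\cL$ is isomorphic to $\bP^{1|1}$: one checks directly that the standard atlas $\{U_0,U_1\}$ of $\bP^{1|1}$ with transition $\phi_{01}^*(u^1) = 1/u^0$, $\phi_{01}^*(\xi^1) = \xi^0/u^0$ gives $\cO_{\bP^{1|1},1}$ the transition function $1/u^0$, which is exactly the transition function of $\cO(-1)$. For the converse direction in (1), if a genus-zero $X$ admits a SUSY structure then $X_\red = \bP^1$ and $\cO_{X,1}$ is forced to be $\cO(-1)$ by the uniqueness above, so $X \cong X_\cL \cong \bP^{1|1}$; the extension class in $H^1(\bP^1, T_{\bP^1} \otimes \cO(-1)^\vee) = H^1(\bP^1,\cO(1))=0$ vanishes, so there is no subtlety about nonsplit structures.

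For part (2), the cleanest route is to compute $\cO_{\bP^1_\Pi,1}$ as an $\cO_{\bP^1_\Pi,0} = \cO_{\bP^1}$-line bundle directly from the transition map $\psi_{01}\colon (u,\xi)\mapsto (1/u, -\xi/u^2)$, read off its transition function $-1/u^2$ (equivalently, note $\cO_{\bP^1_\Pi,1} \cong \cO(-2) = K_{\bP^1}$), and then invoke Corollary \ref{cateq-cor}: $\bP^1_\Pi$ would admit a SUSY structure iff $\bP^1$ admitted a theta characteristic \emph{realized by this particular supermanifold}, i.e. iff $\cO_{\bP^1_\Pi,1}^{\otimes 2} \cong K_{\bP^1}$. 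But $\cO_{\bP^1_\Pi,1}^{\otimes 2} = \cO(-4) \not\cong \cO(-2) = K_{\bP^1}$, a contradiction. Here one has to be slightly careful: Corollary \ref{cateq-cor} as stated says $X_\red$ carries a SUSY structure iff it admits a theta characteristic, but what is actually needed is the sharper statement, implicit in Theorem \ref{catequivalence-thm}, that a \emph{given} supermanifold structure $X$ on $X_\red$ underlies a SUSY-1 curve iff $\cO_{X,1}$ is a square root of $K_{X_\red}$ --- because the SUSY pair recovered from $(X,\cD)$ always has line bundle $\cO_{X,1}$. So the argument is: if $(\bP^1_\Pi, \cD)$ were a SUSY-1 curve, Theorem \ref{catequivalence-thm} would produce an isomorphism $\cO_{\bP^1_\Pi,1}^{\otimes 2} \xrightarrow{\sim} K_{\bP^1}$, which is impossible on degree grounds.

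The main obstacle is purely bookkeeping: correctly identifying the line bundle $\cO_{\bP^1_\Pi,1}$ from the transition data. The supermanifold $\bP^1_\Pi$ is built so that its structure sheaf splits as $\cO_{\bP^1} \oplus \cL$ with $\cL = \cO_{\bP^1_\Pi,1}$, and the transition $\xi^1 = -\xi^0/u^2$ on $U_0 \cap U_1$ (in coordinates where $U_0$ has coordinate $u = u^0$ and $U_1$ has coordinate $1/u$) says precisely that $\cL$ has Čech cocycle $g_{01} = -1/u^2$ with respect to the standard cover; comparing with the cocycle $1/u^2$ (up to the harmless sign, which can be absorbed) of $K_{\bP^1} = \cO(-2)$ shows $\cO_{\bP^1_\Pi,1}\cong \cO(-2)$, so $\cO_{\bP^1_\Pi,1}^{\otimes 2}\cong \cO(-4)$, which cannot be isomorphic to $K_{\bP^1}\cong\cO(-2)$. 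An alternative, more conceptual phrasing avoids Čech cocycles: $\bP^1_\Pi$ is the total space of the "odd" line bundle $\Pi K_{\bP^1}$ restricted to the zero section, i.e. $\cO_{\bP^1_\Pi} = \cO_{\bP^1} \oplus K_{\bP^1}$ with the obvious square-zero multiplication; then $\cO_{\bP^1_\Pi,1} = K_{\bP^1}$ tautologically, and $K_{\bP^1}^{\otimes 2} \not\cong K_{\bP^1}$ since $K_{\bP^1}$ is nontrivial. Either way, one should double-check the sign and normalization conventions in $\psi_{01}$ against the projective atlas conventions fixed in Section \ref{prelim-sec} so that the degree computation is unambiguous.
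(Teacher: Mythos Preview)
Your proposal is correct and follows essentially the same route as the paper: for (1), both arguments use $\mathrm{Pic}(\bP^1)\cong\Z$ to identify $\cO(-1)$ as the unique theta characteristic (with rigidity via $H^0(\bP^1,\cO)=\C$), and for (2), both compute $\cO_{\bP^{1|1}_\Pi,1}\cong\cO(-2)$ from the transition map and invoke Theorem~\ref{catequivalence-thm}. The only cosmetic difference is that the paper phrases the contradiction in (2) as $\cO(-2)\not\cong\cO(-1)$ (using the uniqueness from (1)), whereas you phrase it as $\cO(-2)^{\otimes 2}\cong\cO(-4)\not\cong K_{\bP^1}$; both are equivalent, and your treatment of the ``more generally'' clause in (1) is in fact more explicit than the paper's.
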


\begin{proof}
To prove $(1)$, recall the well-known classification of line bundles on $\bP^1$: $Pic(\bP^1)$ is a free abelian group of rank one, generated by the isomorphism class of the hyperplane bundle $\mathcal{O}(1)$, and $K_{\bP^1} \cong \mathcal{O}(-2)$.

Hence, up to isomorphism, there is a unique theta characteristic on $\bP^1$, namely ($\mathcal{O}(-1), \psi)$ where $\psi$ is any fixed isomorphism of line bundles $\mathcal{O}(-1)^{\otimes 2} \to \mathcal{O}(-2)$. Similar to Prop \ref{susyc1|1-prop}, the proof of this uniqueness reduces to the problem of lifting a given global automorphism of $\mathcal{O}(-1)^{\otimes 2} \cong \mathcal{O}(-2)$ to an automorphism of $\mathcal{O}(-1)$. This requires the fact that $End(L) = L^* \otimes L = \mathcal{O}$ for any line bundle $L$, and that $H^0(\mathbb{P}^1, \mathcal{O}) = \mathbb{C}$. In particular, any global automorphism of $\mathcal{O}(-1)^{\otimes 2}$ is given by multiplication by an invertible scalar, which has an invertible square root in $\mathbb{C}$; this is the desired automorphism of $\mathcal{O}(-1)$. 

\smallskip\noindent
Considering now the statement $(2)$, by Theorem \ref{catequivalence-thm}, if 
$X = \bP^{1|1}_\Pi$ admitted a SUSY-1 structure, we would have $\mathcal{O}_{X, 1} \cong \mathcal{O}(-1)$. Using the coordinates from Section 2, we see that $\mathcal{O}_{X, 1} \cong \mathcal{O}(-2)$. This is a contradiction.
\end{proof}

We now turn to the study of genus one SUSY curves. 

\medskip\noindent
In ordinary geometry a compact Riemann surface $X$ of genus one, that is
an elliptic curve, is obtained by quotienting $\C$ by a lattice $L \cong \Z^2$. 
It is easily seen that any such lattice $L$ is equivalent, under scalar 
multiplication, to a lattice of the form $L_0 := \rspan\{1, \tau\}$, 
where $\tau$ lies in the upper half plane. 
Two lattices $L_0=\rspan\{1, \tau\}$ and
$L_0'=\rspan\{1, \tau'\}$, are equivalent, i.e., yield 
isomorphic elliptic curves, if and only if $\tau$ and $\tau'$ lie in
the same orbit of the group $\Gamma=\rPSL_2(\Z)$, where
the action is via linear fractional transformations:
$$
\tau \mapsto \frac{a\tau + b}{c\tau+d}
$$
A fundamental domain for this action is:
$$
D \, = \, \{ \tau \in \C \, | \, Im(\tau)>0, \, |Re(\tau)| \leq 1/2, \, 
|\tau| \geq 1 \} 
$$

We now want to generalize this picture to the super setting. Our
main reference will be \cite{rabin}.

\smallskip

We start by observing the ordinary action of 
$\Z^2 \cong L_0=\langle A_0, B_0 \rangle$ on $\C$ is given explicitly by:
$$
A_0: z \mapsto z+1, \qquad B_0: z \mapsto z+\tau
$$

\smallskip\noindent
In \cite{rabin} Freund and Rabin take a similar point of view in constructing a super Riemann surface: they define even super elliptic curves as quotients  
of $\mathbb{C}^{1|1}$ by $\mathbb{Z}^2 = \langle A, B \rangle$, acting by: 
\begin{align*}
A:& (z, \zeta) \mapsto (z + 1, \pm \zeta)\\
B:& (z, \zeta) \mapsto (z + \tau, \pm \zeta),
\end{align*} 

\smallskip

In this section, we will justify their choice of these particular actions by showing they are the only reasonable generalizations of the classical actions of $\mathbb{Z}^2$ on $\mathbb{C}$. 

In Sec. \ref{susyc-sec} we proved that on $\C^{1|1}$ there exists, up to isomorphism, only one SUSY structure, corresponding to the vector 
field $V=\partial_\zeta + \zeta \partial_z$. We now want to characterize all possible SUSY automorphisms preserving this SUSY structure.

\smallskip
\noindent
We start with some lemmas.

\begin{lemma}\label{ker-lemma}
Let $X$ be a $1|1$ complex supermanifold and $\omega, \omega'$ 
be holomorphic $1|0$ differential forms on $X$ such that 
$ker(\omega), ker(\omega')$ are $0|1$ distributions. 
Then $ker(\omega) = ker(\omega')$ if and only if $\omega' = t \omega$ 
for some invertible even function $t(z)$.
\end{lemma}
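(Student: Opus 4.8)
The plan is to argue in local coordinates, exploiting the fact that on a $1|1$ supermanifold a $1|0$-form is completely determined by its values on vector fields and that the odd part of the tangent sheaf is locally free of rank $0|1$. First I would fix a coordinate chart $(z,\zeta)$ on an open set $U\subset X$. A holomorphic $1|0$-form $\omega$ on $U$ can be written as $\omega = a\,dz + b\,d\zeta$ with $a$ even and $b$ odd. The hypothesis that $\ker(\omega)$ is a $0|1$ distribution is the statement that $\ker(\omega)\cap (TX)_{1}$ is locally free of rank $0|1$; since $(TX)_1$ is spanned over $U$ by $\partial_\zeta$ and $\zeta\partial_z$, this forces the coefficient $a$ to be invertible (otherwise the kernel would be too big, or would fail to be a direct summand). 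After rescaling we may then assume $\omega = dz + \beta(z)\zeta\,d\zeta$ up to an invertible even factor, and similarly for $\omega'$; in fact a cleaner normalization is to say that $\ker(\omega)$ is spanned by a single odd vector field $D = \partial_\zeta + \gamma(z)\zeta\,\partial_z$, and to recover $\gamma$ from $\omega$.

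The key computation is then: two odd vector fields $D = \partial_\zeta + \gamma\zeta\partial_z$ and $D' = \partial_\zeta + \gamma'\zeta\partial_z$ that each span a $0|1$ distribution span the \emph{same} distribution if and only if $D' = u D$ for an invertible even function $u$, and writing this out gives $u = 1$ and $\gamma = \gamma'$; dually, $\omega$ is determined up to an invertible even scalar by the pair $(\gamma, \text{the requirement }\omega(D)=0)$. So the steps are: (i) show the kernel condition forces the $dz$-coefficient of $\omega$ to be a unit; (ii) normalize both $\omega$ and $\omega'$ and reduce the problem to comparing the single functions recording the kernel; (iii) observe that $\ker(\omega)=\ker(\omega')$ forces these recording functions to coincide, and chase back through the normalization to produce the invertible even $t$ with $\omega' = t\omega$; (iv) conversely, if $\omega' = t\omega$ with $t$ an invertible even function then obviously $\ker(\omega')=\ker(\omega)$, which is the trivial direction.

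One subtlety I want to flag: the statement writes $t = t(z)$, suggesting $t$ depends only on the even coordinate, but a priori an invertible even function on a $1|1$ supermanifold has the form $t_0(z) + t_2(z)\zeta$ — except that with a single odd coordinate $\zeta$ there is no $\zeta$-quadratic term, and an even function cannot contain $\zeta$ linearly, so in fact every even function \emph{is} of the form $t(z)$ automatically; this is exactly the remark made earlier in the paper that $\cO_{X,0}=\cO_{X,\red}$ for $1|1$ supermanifolds, so the notation $t(z)$ is harmless and no genuine restriction. I would make this observation explicit so the reader is not worried.

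The main obstacle, such as it is, is step (i): pinning down precisely what "$\ker(\omega)$ is a $0|1$ distribution" buys us. The point is that $\ker(\omega)$, viewed as a subsheaf of $TX$, being locally free of rank $0|1$ and a local direct summand is equivalent to $\omega$ being, locally, part of a coframe — equivalently the coefficient $a$ of $dz$ is a unit. Once that is established the rest is bookkeeping: rescale $\omega$ to $a^{-1}\omega = dz + (\text{odd})$, do the same for $\omega'$, match kernels, and read off $t = a'/a$ (an invertible even function). I do not expect any serious analytic input here — it is all pointwise algebra in the structure sheaf — which fits the "sketch, leave routine checks to the reader" style of the surrounding text.
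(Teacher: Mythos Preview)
Your approach is correct but takes a different route from the paper's. You work in explicit local coordinates $(z,\zeta)$, write $\omega = a(z)\,dz + \beta(z)\zeta\,d\zeta$, argue that the $0|1$-distribution hypothesis forces $a$ to be a unit, normalize, and then observe that the kernel is recorded by a single function $\gamma$; equality of kernels forces these functions to match, and the global $t$ emerges as the ratio $a'/a$ of leading coefficients (well-defined on overlaps because $\omega$ is nowhere degenerate). The paper instead argues coordinate-free: it chooses a local complement $\mathcal{E}$ to $\mathcal{D}=\ker(\omega)$ in $TX$, uses the super Nakayama lemma to show that $\omega(W)$ and $\omega'(W)$ are units for any local basis $W$ of $\mathcal{E}$, defines $t:=\omega'(W)/\omega(W)$, and then checks this ratio is independent of the choice of complement and of $W$ (using $\omega(Z)=\omega'(Z)=0$ for $Z$ spanning the common kernel), so that $t$ is automatically global. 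Your coordinate argument is more concrete and makes the $1|1$ specifics --- in particular the identity $\cO_{X,0}=\cO_{X,\red}$, which you rightly flag as explaining the notation $t(z)$ --- visibly do the work; the paper's argument is slightly more invariant and makes the global well-definedness of $t$ immediate, at the cost of invoking Nakayama and an auxiliary splitting. Both are short and essentially equivalent in content.
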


\begin{proof}

The $\Leftarrow$ implication is clear. To prove the $\Rightarrow$ implication,
 we can reduce to a local calculation. Suppose now that 
$ker(\omega) = ker(\omega')$. Given any point $p$ in $X$, fix an 
open neighborhood $U \ni p$ where $TX|_U$ is free. As $\cD$ is locally a direct
summand, it is locally free of rank $0|1$, by the super Nakayama's lemma (see \cite{vsv2})
and $\cD |_U$ has a local complement $\mathcal{E} \subset TX|_U$ 
(shrinking $U$, if needed). 

\smallskip\noindent
Let us use the notation $\cO_U=\cO_X|_U$ and $\cO(U)=\cO_U(U)$.
As $\mathcal{E}$ is also a direct summand 
of $TX|_U$, it is also a free $\mathcal{O}_U$-module (again possibly 
shrinking $U$) hence must be of rank $1|0$. Hence we have a local splitting 
$TX|_U = \cD \oplus \mathcal{E}$ of free $\mathcal{O}_U$-modules. 
Let $Z$ be a basis for $\mathcal{D}|_U$, $W$ a basis for $\mathcal{E}$; 
then $W, Z$ form a basis for $TX|_U$.

\smallskip\noindent
$\omega|_U: \mathcal{O}_{TX}(U) \to \mathcal{O}_U(U)$ induces an even linear 
functional $\omega_p: T_pX \to \mathbb{C}$ on the tangent space at $p$, and the
 splitting $TX|_U = \cD \oplus \mathcal{E}$ induces a corresponding splitting 
$T_pX = D_p \oplus E_p$ of super vector spaces, with $dim(D_p) = 0|1$, $dim(E_p)
 = 1|0$, such that $ker(\omega_p) = D_p$ and span$(W_p) = E_p$. By linear 
algebra, $\omega_p|_{E_p}$ is an isomorphism; in particular, $\omega_p(W_p)$ is 
a basis for $\C$ = $\cO_p/\mathfrak{M}_p$. The super Nakayama's lemma then 
implies that $\omega(W)$ generates $\mathcal{O}_U$ as $\mathcal{O}_U$-module 
(again shrinking $U$ if necessary), which is true if and only if 
$\omega(W)$ is a unit; the same is true for $\omega(W')$.
 
\smallskip\noindent
We now show that the ratio $\omega'(W)/\omega(W) \in \mathcal{O}^*_{U,0}$ is 
independent of the local complement $\mathcal{E}$ and the choice of $W$, so 
that it defines an invertible even function $t$ on all of $X$. Suppose 
$\mathcal{E}'$ is another local complement to $\mathcal{D}$ on $U$, and $W'$ 
a local basis for $\mathcal{E}'$. We have as before that $Z, W'$ form a basis 
of $TX|_U$. Then $\omega(W'), \omega'(W')$ are invertible in $\mathcal{O}_U$ 
by the above argument, and $W' = bW + \beta Z$ with 
$b \in \mathcal{O}^*_{U, 0}$, $\beta \in \mathcal{O}_{U, 1}$.

\begin{align*}
\frac {\omega(W')} {\omega'(W')} &= \frac {b\omega(W) + \beta \omega(Z)} 
{b\omega'(W) + \beta \omega'(Z)}\\
&= \frac {\omega(W)} {\omega'(W)}
\end{align*}

\noindent Note here that we have used the hypothesis 
$ker(\omega) = ker(\omega')$ to conclude $\omega(Z) = \omega'(Z) = 0$.

Finally,
we verify that $\omega'= t \omega$; this can again be done locally 
since $t$ is now known to be globally defined. The argument is left to the reader.
\end{proof}

The odd vector field $V=\partial_\zeta + \zeta \partial_z$ defining
our SUSY structure $\cD$ is dual to the 
differential form $\omega := dz - \zeta \, d\zeta$.
As one can readily check $\mathcal{D} =$ span$\{V\} = ker(\omega)$.

\begin{lemma}\label{form-lemma}
An automorphism $F: \mathbb{C}^{1|1} \to \mathbb{C}^{1|1}$ is a SUSY automorphism if and only if 
$F^*(\omega) = t(z)\omega$ for some invertible even function $t(z)$. 
\end{lemma}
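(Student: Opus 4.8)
The plan is to unwind the definitions on both sides and reduce the equivalence to a pointwise statement about tangent spaces, which Lemma \ref{ker-lemma} then converts into the factorization $F^*(\omega) = t(z)\omega$. Recall that $\cD = \mathrm{span}\{V\} = \ker(\omega)$, and that by definition $F$ is a SUSY automorphism precisely when $(dF)_p(\cD_p) = \cD_{|F|(p)}$ for every reduced point $p \in \C = \C^{1|1}_\red$.

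First I would treat the $(\Leftarrow)$ direction. Suppose $F^*(\omega) = t(z)\omega$ with $t$ an invertible even function. I claim this forces $(dF)_p$ to carry $\cD_p$ into $\cD_{|F|(p)}$. The point is that $F^*\omega$ has the same kernel as $\omega$ (since $t$ is a unit), i.e. $\ker(F^*\omega) = \ker(\omega) = \cD$. On the other hand, $F^*\omega$ pairs with a vector field $Y$ to give $\omega(dF(Y))$ composed appropriately, so $\ker(F^*\omega)$ is the pullback of $\ker(\omega)$ under $dF$; hence $dF(\cD) \subseteq \cD$, and since $F$ is an automorphism (so $dF$ is invertible at each point), equality of the $0|1$ distributions follows, giving $(dF)_p(\cD_p) = \cD_{|F|(p)}$.

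For the $(\Rightarrow)$ direction, suppose $F$ is a SUSY automorphism, so $(dF)_p(\cD_p) = \cD_{|F|(p)}$ for all reduced $p$. I would set $\omega' := F^*(\omega)$, a holomorphic $1|0$-form on $\C^{1|1}$. The first task is to check that $\ker(\omega')$ is a $0|1$ distribution and in fact that $\ker(\omega') = \ker(\omega) = \cD$. This is again the identity $\ker(F^*\omega) = (dF)^{-1}(\ker \omega)$, combined with the hypothesis $dF(\cD) = \cD$: pulling the SUSY condition back through $dF$ shows $\ker(\omega') = \cD$. (A small point worth spelling out: the SUSY condition is stated only at reduced points, but since the sheaves involved are locally free and a morphism of locally free sheaves that is an isomorphism at every point of the reduced space is an isomorphism, this suffices to conclude $\ker(\omega') = \cD$ as subsheaves — this is the same style of Nakayama argument used in Lemma \ref{ker-lemma}.) Once $\ker(\omega') = \ker(\omega)$ with both kernels $0|1$ distributions, Lemma \ref{ker-lemma} applies verbatim and yields $\omega' = t(z)\omega$ for some invertible even function $t(z)$, which is exactly $F^*(\omega) = t(z)\omega$.

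The main obstacle, such as it is, is the bookkeeping in passing between the pointwise statement $(dF)_p(\cD_p) = \cD_{|F|(p)}$ and the sheaf-level statement $\ker(F^*\omega) = \cD$; one must be careful that pulling back a $1|0$-form and pulling back a distribution are compatible in the sense that $\ker(F^*\omega)$ really is $dF^{-1}(\ker\omega)$, and that checking this on the reduced space is enough by local freeness and the super Nakayama lemma. Everything else is a direct consequence of Lemma \ref{ker-lemma} and the observation $\cD = \ker(\omega)$.
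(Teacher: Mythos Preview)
Your proposal is correct and follows essentially the same approach as the paper: both arguments reduce the SUSY condition to the pointwise equality $\ker(F^*\omega)_p = \ker(\omega)_p$, use the super Nakayama lemma to upgrade this to the sheaf-level equality $\ker(F^*\omega) = \ker(\omega)$, and then invoke Lemma~\ref{ker-lemma}. Your write-up is slightly more explicit about the identity $\ker(F^*\omega) = (dF)^{-1}(\ker\omega)$ and about the $(\Leftarrow)$ direction, but the structure of the argument is the same.
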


\begin{proof}
Unraveling the definitions, one sees that $F$ preserves the 
SUSY structure if and only if 
$$ker(F^*(\omega))_p = ker(\omega)_p$$
for each $p \in X$. We claim that the latter is true if and only if 
$ker(F^*(\omega)) = ker(\omega)$. 
One implication is clear. 
Conversely, suppose that $ker(F^*(\omega))_p = ker(\omega)_p$ for each 
$p \in X$. By a standard argument using the super Nakayama's Lemma, 
$ker(F^*(\omega)) = ker(\omega)$ in a neighborhood of 
$p$ for any point $p$, hence $ker(\omega) = ker(F^*(\omega))$. 
The result then follows by Lemma \ref{ker-lemma}.
\end{proof}

We are now ready for the result characterizing all of the 
SUSY automorphisms of $\C^{1|1}$. 

\begin{proposition} 
Let $(z, \zeta)$ be the standard linear coordinates on $\C^{1|1}$, 
and let $\C^{1|1}$ have the natural SUSY-1 structure defined by the vector field 
$V=\partial_\zeta + \zeta \partial_z$. The SUSY automorphisms of $\C^{1|1}$ 
are precisely the endomorphisms $F$ of $\mathbb{C}^{1|1}$ such that: 
$$
F(z,\zeta)=(az+b, \pm\sqrt{a} \zeta)
$$
\noindent where $a \in \mathbb{C}^*, b \in \mathbb{C}$, and $\sqrt{a}$ 
denotes either of the two square roots of $a$.
\end{proposition}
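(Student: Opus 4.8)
The plan is to use Lemma \ref{form-lemma}, which reduces the problem to determining all automorphisms $F$ of $\C^{1|1}$ satisfying $F^*(\omega) = t(z)\,\omega$ for some invertible even function $t(z)$, where $\omega = dz - \zeta\,d\zeta$. First I would write an arbitrary automorphism of $\C^{1|1}$ in the form $F(z,\zeta) = (f(z) + \zeta\,\phi(z),\ g(z)\zeta + \psi(z))$, where $f,g$ are holomorphic even functions and $\phi,\psi$ are holomorphic odd-valued functions; actually since $\C^{1|1}$ has only one odd coordinate and no odd constants are available over $\C$, the honest general form is $F^*(z) = f(z) + \zeta\,\beta(z)$ with $\beta$ odd and $F^*(\zeta) = \gamma(z) + g(z)\zeta$ with $\gamma$ odd — but over the reduced point there are no odd scalars, so $\beta = \gamma = 0$ and we simply have $F^*(z) = f(z)$, $F^*(\zeta) = g(z)\zeta$ with $f,g$ holomorphic on $\C$ and $F$ an automorphism forcing $f$ and $g$ to be units (invertible holomorphic functions). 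I would make this reduction explicit at the start.

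Next I would compute the pullback directly: $F^*(\omega) = d(f(z)) - (g(z)\zeta)\,d(g(z)\zeta) = f'(z)\,dz - g(z)\zeta\bigl(g'(z)\zeta\,dz + g(z)\,d\zeta\bigr) = f'(z)\,dz - g(z)^2\zeta\,d\zeta$, using $\zeta^2 = 0$. Setting this equal to $t(z)\bigl(dz - \zeta\,d\zeta\bigr)$ and comparing the $dz$ and $\zeta\,d\zeta$ components gives $t(z) = f'(z)$ and $t(z) = g(z)^2$, hence the single equation $f'(z) = g(z)^2$. So the SUSY automorphisms are exactly the pairs $(f,g)$ of holomorphic units on $\C$ with $f' = g^2$. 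Now I would argue that a holomorphic unit $g$ on all of $\C$ whose square is the derivative of a globally defined (automatically holomorphic) function $f$ forces $g$ to be constant: indeed $f' = g^2$ is a nowhere-vanishing entire function, and if $f$ is moreover required to be an automorphism of $\C$ then $f(z) = az + b$ with $a \in \C^*$, whence $g^2 = a$ is constant and $g = \pm\sqrt{a}$. Conversely every such pair plainly defines an automorphism satisfying $f' = g^2$, so it is a SUSY automorphism.

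The one point that needs a little care — and the only place where "automorphism of $\C^{1|1}$" does real work beyond "endomorphism" — is justifying that $f$ must be affine. If one only knew $F$ were an endomorphism, $f$ could be any entire function with nowhere-vanishing derivative (e.g.\ $f = e^z$, as in the example immediately preceding this proposition) and $g$ a corresponding square root of $f'$; the previous example shows precisely that $Z = \partial_\zeta + e^z\zeta\,\partial_z$ arises from such a non-affine endomorphism. Invertibility of $F$ means $|F|: \C \to \C$ is a biholomorphism, so $f$ is an automorphism of $\C$, hence affine $f(z) = az + b$; this is a standard fact and I would simply cite it. I would then conclude $g(z)^2 = a$, so $g \equiv \pm\sqrt a$, giving exactly $F(z,\zeta) = (az + b,\ \pm\sqrt a\,\zeta)$ as claimed.

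I expect no serious obstacle: the main step is the pullback computation and the comparison of components, which is routine, and the only conceptual input is Lemma \ref{form-lemma} together with the elementary classification of automorphisms of $\C$. If anything, the subtle point worth flagging for the reader is the contrast with the endomorphism case, which I would note briefly since the surrounding text has just emphasized it.
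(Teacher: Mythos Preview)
Your proposal is correct and follows essentially the same approach as the paper: reduce to $F(z,\zeta)=(f(z),g(z)\zeta)$, invoke Lemma~\ref{form-lemma}, compute $F^*(\omega)=f'\,dz-g^2\zeta\,d\zeta$, equate with $t(z)\omega$ to get $f'=g^2$, and use that a biholomorphism of $\C$ is affine to conclude $g^2=a$ is constant. The only cosmetic difference is ordering---the paper establishes $f(z)=az+b$ before the pullback computation, you after---and your explicit remark contrasting with the non-affine endomorphism example is a nice addition.
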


\begin{proof} 
Let $F$ be such an automorphism, and $z, \zeta$ the standard coordinates 
on $\C^{1|1}$. Then by the Chart Theorem, $F(z, \zeta) = (f(z), g(z) \zeta)$ 
for some entire functions $f, g$ of $z$. Similarly, $F^{-1}(z, \zeta) = 
(h(z), k(z) \zeta))$ for some entire functions $h, k$.
Since $F $ and $F^{-1}$ are inverses, $f$ is a biholomorphic 
automorphism of $\mathbb{C}^{1|0}$, hence linear by standard facts from complex 
analysis: $f(z) = az + b$ for some $a, b \in \mathbb{C}$, $a \neq 0$; 
the same is true for $h$.

So by the Lemma \ref{form-lemma}, 
$F$ preserves the SUSY-1 structure on $\C^{1|1}$ 
if and only if $F^*(\omega) = t(z) \omega$. We calculate: 

\begin{align*}
F^*(dz - \zeta \, d \zeta) &= df - F^*(\zeta) \, d(g \zeta)\\
&= f' \, dz - g^2 \zeta \, d \zeta.
\end{align*}\

\noindent Equating this with $t(z) \omega$, we see $t = f' = g^2$. 
Thus $g^2 = a$, so in particular $g$ is constant. Hence:

\begin{align*}
F(z, \zeta) = (az + b, c \zeta)
\end{align*}\

where $c^2 = a$, $a \in \mathbb{C} \backslash \{0\}$, $b \in \mathbb{C}$. 
Conversely, one checks that any morphism $\C^{1|1} \to \C^{1|1}$ of the above 
form is an automorphism, and that it preserves the SUSY structure.
\end{proof}

From our previous proposition, we conclude immediately that the only 
actions of $\mathbb{Z}^2$ on $\mathbb{C}^{1|1}$ that restrict to 
the usual action on the reduced space $\mathbb{C}$ are of the form:
\begin{align*}
A:& (z, \zeta) \mapsto (z + 1, \pm \zeta)\\
B:& (z, \zeta) \mapsto (z + \tau, \pm \zeta),
\end{align*}
since the actions of $A$ and $B$ must be by automorphisms of the form:
\begin{align*}
(z, \zeta) \mapsto (az + b, \pm \sqrt{a} \zeta)
\end{align*}
and in this case, $a$ must be taken to be $1$. This justifies the
choice made in \cite{rabin}.

\begin{remark} 
Using Theorem \ref{catequivalence-thm}, we see that the SUSY structures on $X_{\red}$ correspond one-to-one to isomorphism classes of theta characteristics on $X_{\red}$. It is well-known from the theory of elliptic curves over $\mathbb{C}$ that an elliptic curve $X_{\red}$ has four distinct theta characteristics, up to isomorphism. Regarding $X_{\red}$ as an algebraic group, these theta characteristics correspond to the elements of the subgroup of order $2$ in $X_{\red}$.

As noted in \cite{Levin}, one can define the {\it parity} of a theta characteristic $\cL$ as $dim \,H^0(X_{\red}, \mathcal{L}) \, (mod \; 2)$. This is a fundamental invariant of the theta characteristic (cf. \cite{Atiyah} where the parity is shown to be stable under holomorphic deformation). The isomorphism class of the trivial theta characteristic $\mathcal{O}_{X_{\red}}$ is distinguished from the other three by its parity: it has odd parity, the others have even parity. The odd case is therefore fundamentally different from the perspective of supergeometry, and is best studied in the context of {\it families} of super Riemann surfaces; families of odd super elliptic curves are considered in, for instance, \cite{rabin}, \cite{Levin}, \cite{witten}.
\end{remark}

\medskip\noindent
In \cite{rabin}, Rabin and Freund also describe a projective embedding
of the SUSY curve defined by $\C^{1|1}/\langle \, A,\, B \, \rangle$
using the classical Weierstrass function $\wp$ and the function $\wp_1$ defined
as $\wp_1^2=\wp-e_1$ (as usual $e_1=\wp(\omega_i)$ with $\omega_1=1/2$,
$\omega_2=\tau/2$ and $\omega_3=(1+\tau)/2$).
If $U_0$, $U_1$, $U_2$ is the open cover of $\bP^{2|3}(\C)$ described
in Sec. \ref{prelim-sec}, on $U_2$ the embedding is defined as:
$$
\begin{array}{ccc}
\C^{1|1}/\langle \, A,\, B \, \rangle & \lra & U_2 \subset  \bP^{2|3}(\C) \\ \\
(z,\zeta) & \mapsto & [\wp(z), \wp'(z), 1, \wp_1(z)\zeta, 
 \wp_1'(z)\zeta,  \wp_1(z)\wp(z)\zeta] 
\end{array}
$$
In \cite{rabin}, they describe also the equations of the ideal
in $U_2$ corresponding to the SUSY curve in this embedding:
$$
\begin{array}{c}
y^2= 4x^3-a_1x^2-a_2, \qquad 
2(x-e_1)\eta_2=y\eta_1 \\ \\
y\eta_2=2(x-e_2)(x-e_3)\eta_1 \qquad
\eta_3=x\eta_1
\end{array}
$$
where $(x,y,\eta_1, \eta_2, \eta_3)$ are the global coordinates
on $U_2\cong \C^{2|3}$. One can readily compute the homogeneous
ideal in the ring $\C[x_0,x_1,x_2, \xi_1, \xi_2,\xi_3]$ associated
with the given projective embedding. It is generated by the equations:
$$
\begin{array}{c}
x_1^2x_2= 4x_0^3-a_1x_0^2x_2-a_2x_2^3, \qquad 
2(x_0x_2-e_1x_2^2)\xi_2=x_1x_2\xi_1 \\ \\
x_1x_2\xi_2=2(x_0-e_2x_2)(x_0-e_3x_2)\xi_1 \qquad
\xi_3x_2=x_0\xi_1
\end{array}
$$

\appendix

\section{$\Pi$-Projective geometry revisited}\label{D-app}

We devote this appendix to reinterpret the $\Pi$-projective
line, discussed in Sec. \ref{pigeom-sec}, through the superalgebra $\bD$. 

\medskip

Let $\bD$ denote the \textit{super skew field}, $\bD=\C[\theta]$,
$\theta$ odd and $\theta^2=-1$. As a complex super vector space 
of dimension $1|1$, $\bD=\{a + b\theta\, |\, a,b \in \C\}$, thus it has a 
canonical structure of analytic supermanifold, and its functor of points is:
$$
T \mapsto \bD(T):=(\bD\otimes\cO(T))_0=\bD_0 \otimes \cO(T)_0 \oplus
\bD_1 \otimes \cO(T)_1
$$
Let $\bD^\times$ be the analytic supermanifold obtained by restricting the 
structure sheaf of the supermanifold $\bD$ to the open subset 
$\bD \setminus \{0\}$.

\smallskip\noindent
$\bD^\times$ is an analytic supergroup and its functor of points is:
$$
T \mapsto \bD^\times(T):=(\bD\otimes\cO(T))_0^*
$$
where $(\bD\otimes\cO(T))_0^*$ denotes the invertible elements in
$(\bD\otimes\cO(T))_0$;

As a supergroup $\bD^\times$ is isomorphic to $\bG_m^{1|1}$, which
is the supergroup with underlying topological space $\C^\times$, described
in Sec. \ref{pigeom-sec}. 
The isomorphism between $\bG_m^{1|1}$ and
$\bD^\times$ simply reads as:
$$
(a, \al) \mapsto a + \theta \al
$$
Notice that
$\bG_m^{1|1}$ (hence $\bD^\times$) is naturally embedded into $\rGL(1|1)$, 
the complex general linear supergroup
via the morphism (in the functor of points notation):
$$
\begin{array}{ccccc}
\bD^\times(T) & \cong & \bG_m^{1|1}(T) & \lra & \rGL(1|1)(T) \\ \\
 a + \theta \al & \cong & 
(a, \al) & \mapsto & \begin{pmatrix}a & \al \\ \al & a \end{pmatrix}
\end{array}
$$

\medskip\noindent
Before we continue this important characterization of $\Pi$-projective
geometry due to Deligne, let us point out that while $\bG_m^{1|1} \cong \bD^\times$
are commutative supergroups, the commutative algebra $\bD=\C[\theta]$
is not a commutative superalgebra, because if it were, then $\theta^2=0$
and not $\theta^2=-1$ as we have instead. This is an important fact, which
makes $\Pi$-projective supergeometry more similar to non commutative
geometry than to regular supergeometry.

\medskip

We now want to relate more closely the $\Pi$-projective supergeometry
with $\bD$.

\begin{lemma} \label{oddendo-lemma} 
A right action of $\bD$ on a complex super vector space $V$ is equivalent 
to the choice of an odd endomorphism $\phi$ of $V$ such that $\phi^2=1$.
\end{lemma}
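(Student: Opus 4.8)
The plan is to unwind both sides of the claimed equivalence into explicit data and check they determine each other. Recall $\bD = \C[\theta]$ with $\theta$ odd and $\theta^2 = -1$. A \emph{right} action of $\bD$ on a complex super vector space $V$ is a homomorphism of superalgebras $\rho \colon \bD^{\mathrm{op}} \to \End(V)$, equivalently a linear map $V \times \bD \to V$, $(v,d) \mapsto v\cdot d$, that is associative, unital, and respects the $\Z/2$-grading. Since $\bD$ is generated as an algebra by $1$ and $\theta$, such an action is completely determined by the operator $\phi \colon v \mapsto v\cdot\theta$. Because $\theta$ is odd, $\phi$ is an odd endomorphism of $V$; because $\theta^2 = -1$ and the action is a right action, one computes $v\cdot\theta^2 = (v\cdot\theta)\cdot\theta = \phi^2(v)$ while $v\cdot(-1) = -v$, so $\phi^2 = -1$. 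Wait --- here the lemma as stated asks for $\phi^2 = 1$, so the correspondence must insert a sign or a factor of $i$ (or the action is the one where $\theta$ acts so that $\phi^2 = 1$ matches $\theta^2 = -1$ under a twist); I would track this carefully and, if needed, define $\phi$ by $v\cdot\theta = \sqrt{-1}\,\phi(v)$ or note the convention $\theta^2 = -1$ translates to $\phi^2 = 1$ via the relation $(v\cdot\theta)\cdot\theta = -v$ being rewritten. The cleanest route is: set $\phi(v) := v\cdot\theta$; then $\phi$ odd and $\phi^2 = -\mathrm{id}$, and observe that over $\C$ the categories of modules for $\C[\theta]/(\theta^2+1)$ and $\C[\theta]/(\theta^2-1)$ are equivalent via $\theta \mapsto i\theta$, so one may equally record the data as an odd $\psi$ with $\psi^2 = 1$. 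I will present whichever normalization makes the statement literally $\phi^2 = 1$, most likely $\phi^2=1$ being exactly the condition encoding $\theta^2 = -1$ because squaring an \emph{odd} operator and the Koszul sign flip the sign once more --- this subtlety is the one place to be careful.

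Granting the correct bookkeeping, the forward direction is then immediate: given a right $\bD$-action, define $\phi$ as above; it is an odd endomorphism with $\phi^2 = 1$ (in the chosen normalization), and the action of a general element $a + b\theta \in \bD$ on $v$ is recovered as $v\cdot(a+b\theta) = av + b\,\phi(v)$. For the converse, given an odd endomorphism $\phi$ of $V$ with $\phi^2 = 1$, define $v\cdot(a + b\theta) := av + b\,\phi(v)$ for $a,b \in \C$, and more generally extend to $\bD\otimes R$-points in the functor-of-points sense by $R$-linearity. One checks this is well-defined (the only relation in $\bD$ is $\theta^2 = -1$, which holds because $\phi^2$ equals the appropriate scalar), unital, and associative: associativity reduces to verifying $(v\cdot d_1)\cdot d_2 = v\cdot(d_1 d_2)$ on the generators $d_1, d_2 \in \{1,\theta\}$, which is a one-line computation using $\phi^2 = \pm 1$ and the fact that $\phi$ is odd (so signs from the grading are correctly accounted for). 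The grading compatibility is automatic since $\phi$ is odd and $\theta$ is odd. Finally one notes the two constructions are mutually inverse --- starting from $\phi$, building the action, and extracting the operator $v\mapsto v\cdot\theta$ returns $\phi$; starting from an action and doing the reverse likewise returns the original action, since an action is determined by its values on algebra generators.

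The main obstacle, as flagged, is purely a sign/convention issue: reconciling $\theta^2 = -1$ in $\bD$ with the clean condition $\phi^2 = 1$ on the endomorphism side, keeping track of the Koszul sign rule when squaring an odd operator and when moving scalars past odd elements in the functor-of-points description $(\bD\otimes\cO(T))_0$. Everything else is formal. I would write the proof in two short paragraphs (``Given a right action\dots'' and ``Conversely\dots''), doing the associativity check explicitly on generators and leaving the remaining verifications --- unitality, mutual inverseness, functoriality in $T$ --- as routine, since they parallel the style of the earlier functor-of-points arguments (Propositions \ref{fopt-proj-prop} and \ref{fopts-pi}) in the paper.
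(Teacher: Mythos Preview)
Your overall architecture is right: the action is determined by the single odd generator $\theta$, and the two constructions are mutually inverse. The entire content of the lemma is the sign issue you flag, and you have not actually resolved it---you waver between an $i$-rescaling and an unspecified Koszul sign, without writing down a definite formula and verifying $\phi^2=1$.

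The paper's resolution is cleaner and more conceptual than either of your suggestions. A right $\bD$-action is the same as a left action of the opposite superalgebra $\bD^o$, and in the \emph{super} opposite one has $(\theta\theta)^o = (-1)^{|\theta||\theta|}\theta^o\theta^o = -(\theta^o)^2$, so $(\theta^o)^2 = +1$. Concretely, the paper sets
\[
\phi(v) := (-1)^{|v|}\, v\cdot\theta,
\]
and then $\phi^2(v) = (-1)^{|v|+1}(-1)^{|v|}\,v\cdot\theta^2 = (-1)(-1)v = v$. The converse is $v\cdot(a+b\theta) := av + (-1)^{|v|}b\,\phi(v)$. This is exactly the Koszul sign you allude to, but coming from the definition of the super-opposite rather than from ``squaring an odd operator'' (your map $v\mapsto v\cdot\theta$ genuinely squares to $-1$ with no hidden sign; the sign enters only when you insist that $\phi$ be a \emph{left} endomorphism). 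Your $i$-rescaling $\phi = i\,(v\mapsto v\cdot\theta)$ also works over $\C$, since $\bD \cong \bD^o$ via $\theta\mapsto i\theta$, but it obscures the structural reason and would fail over $\R$; the paper does not take this route.
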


\begin{proof}
Let $V$ be a right $\bD$-module. A right action of $\bD=\C[\theta]$ 
is an antihomomorphism $f: \bD \to \underline{\mathrm{End}}(V)$, 
which corresponds to a left action of the opposite algebra $\bD^o=\C[\theta^o]$,
$(\theta^o)^2=1$ ($\underline{\mathrm{End}}(V)$ denotes all of the endomorphisms
of $V$, not just the parity preserving ones).
Such actions are specified once we know the odd endomorphisms $\psi$ and $\phi$ corresponding respectively to $\theta$ and $\theta^o$.
Hence explicitly right multiplication 
by $\theta$ gives rise to an odd endomorphism $\phi$ such 
that $\phi^2 = 1$, by: 
\begin{align*}
\phi(v) := (-1)^{|v|} v \cdot \theta
\end{align*}
Conversely, given a super vector space $V$ and an odd endomorphism $\phi$
of square $1$, we can define a right $\bD$-module structure on $V$ by:
\begin{align*}
v \cdot (a + b \theta) := v \cdot a + (-1)^{|v|} \phi(v) \cdot b
\end{align*}
\end{proof}

Given any complex supermanifold $X$, there is a sheaf $\underline{\bD}$ of 
superalgebras, defined by $\underline{\bD}(U) := 
\mathcal{O}_X(U) \otimes_{\C} \bD$, for any open set $U \subseteq |X|$. 
Then a sheaf of right (resp. left) $\bD$-modules on $X$ is a sheaf of right 
(resp. left) modules for the sheaf $\underline{\bD}$; 
a {\it morphism} $\cF \to \cF'$ of sheaves of $\bD$-modules is 
simply a sheaf morphism that intertwines the $\bD$-actions on $\cF, \cF'$. 
A sheaf of $\bD$-modules $\mathcal{F}$ is {\it locally free of $\bD$-rank $n$} 
if $\mathcal{F}$ is locally isomorphic to $\underline{\bD}^n$.

\begin{lemma} \label{transfunctions-lemma}
Let $X$ be a complex supermanifold and
let $U$ be an open set. If $V$ is a free $\underline{\bD}(U)$-module 
of $\bD$-rank $1$, ${\mathrm{Aut}}_{\bD}(V) \cong 
\bD^\times(U)$.
\end{lemma}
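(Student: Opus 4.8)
The plan is to compute $\mathrm{Aut}_{\bD}(V)$ explicitly by picking a $\bD$-basis and tracking what a $\bD$-linear automorphism can do. First I would fix a basis vector $v$ of $V$ as a free $\underline{\bD}(U)$-module, so that every element of $V$ is uniquely $v\cdot d$ for $d \in \underline{\bD}(U) = \cO_X(U)\otimes_\C\bD$. Any endomorphism $\psi$ of $V$ as a sheaf of right $\bD$-modules is determined by $\psi(v) = v\cdot d_0$ for a single element $d_0 \in \underline{\bD}(U)$, by right $\bD$-linearity: $\psi(v\cdot d) = \psi(v)\cdot d = v\cdot(d_0 d)$. Composition of endomorphisms then corresponds to multiplication in $\underline{\bD}(U)$ (in the appropriate order), so $\mathrm{End}_\bD(V) \cong \underline{\bD}(U)$ as algebras and hence $\mathrm{Aut}_\bD(V)$ is the group of invertible elements $\underline{\bD}(U)^\times = (\bD\otimes\cO_X(U))^\times$.

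The second step is to identify this group of units with $\bD^\times(U)$. Here I would invoke the description of $\bD^\times$ from the appendix: $\bD^\times(T) = (\bD\otimes\cO(T))_0^\times$, the invertible elements of the \emph{even} part. The point requiring a small argument is that an element of $\bD\otimes\cO_X(U)$ — which a priori has both even and odd components, $a + b\theta$ with $a \in \cO_X(U)$ arbitrary and $b\theta$ having $b$ arbitrary — that is $\bD$-linear and invertible as a module endomorphism must in fact be even. But a $\bD$-module morphism is by definition parity-preserving (it is a morphism of sheaves of $\bD$-modules, i.e.\ a degree-zero map), so left multiplication by $d_0$ preserves parity, forcing $d_0 \in (\bD\otimes\cO_X(U))_0$. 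Invertibility of $\psi$ is then equivalent to invertibility of $d_0$ in this even subalgebra, giving exactly $d_0 \in \bD^\times(U)$.

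I would close by noting that all of these identifications are compatible with restriction to smaller opens, so the isomorphism is one of sheaves of groups on $U$ (though the statement as phrased only asserts an abstract group isomorphism, which is what the above delivers). The main obstacle, such as it is, is the bookkeeping around parity and the order of multiplication: because $\bD$ acts on the \emph{right}, endomorphisms are given by \emph{left} multiplication, and one must be careful that the anti-homomorphism/homomorphism conventions from Lemma \ref{oddendo-lemma} are used consistently. I expect this to be entirely routine once the basis is fixed, so the proof will be short; the substantive content is really just the observation that a rank-one free module over $\underline{\bD}$ has endomorphism algebra $\underline{\bD}$ itself, exactly as in the commutative case.
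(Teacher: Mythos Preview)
Your approach is essentially the same as the paper's: reduce to $V=\underline{\bD}(U)$ via a choice of basis, then observe that a right $\bD$-linear endomorphism is determined by the image of the generator, so $\mathrm{Aut}_\bD(V)$ is the unit group. Your explicit treatment of the parity constraint (forcing $d_0$ into the even part so that the units are exactly $\bD^\times(U)$) is a welcome clarification the paper's one-line proof leaves implicit.
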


\begin{proof}
Since $V$ is free of $\bD$-rank 1, we may reduce to the case $V = \underline{\bD}(U)$ as right $\bD$-modules, where this identification is obvious: $f \mapsto f(1) \in \underline{\bD}^\times(U)$ for $f \in \mathrm{Aut}_{\bD}(V)$. 
\end{proof}

\medskip

We are now ready to reinterpret the functor of points of the 
$\Pi$-projective line.

\begin{proposition}
Let the notation be as above.
\begin{align*}
P_\Pi(T) = \{\text{locally free, $\bD$-rank 1 right } 
\bD\text{-subsheaves } \mathcal{F_T} \subseteq \mathcal{O}_T^{2|2}\}
\end{align*}
In other words, the functor of points of the $\Pi$-projective line 
associates to each supermanifold $T$ the set of locally free
right $\bD$-subsheaves of rank $1|0$ of ${\cO}_T^{2|2}$. 
\end{proposition}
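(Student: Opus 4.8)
The plan is to convert the $\phi$-invariance condition in the definition of $P_\Pi$ into the statement of being a sheaf of right $\underline{\bD}$-submodules, using Lemma \ref{oddendo-lemma}, and then to match the $\cO_T$-rank $1|1$ condition with being locally $\bD$-free of rank $1$, using Lemma \ref{basis}.

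First I recall that the odd endomorphism $\phi$ of $\C^{2|2}$ of Section \ref{pigeom-sec} satisfies $\phi^2=1$, so by Lemma \ref{oddendo-lemma} it is the same datum as a right $\bD$-module structure on $\C^{2|2}$, given explicitly by $v\cdot(a+b\theta)=va+(-1)^{|v|}\phi(v)b$. Extending scalars, $\cO_T^{2|2}=\C^{2|2}\otimes_\C\cO_T$ becomes a sheaf of right $\underline{\bD}$-modules, with $\phi$ acting on the $\C^{2|2}$ factor (which is exactly how $\phi$ was extended to $\cO_T^{2|2}$ in Section \ref{pigeom-sec}); and a subsheaf $\cF_T\subseteq\cO_T^{2|2}$ is $\phi$-invariant if and only if it is a sub-$\underline{\bD}$-module sheaf, since $v\cdot\theta=(-1)^{|v|}\phi(v)$. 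So it remains only to check that, for such a $\phi$-invariant $\cF_T$, being locally $\cO_T$-free of rank $1|1$ is equivalent to being locally $\bD$-free of rank $1$.

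One implication is immediate: $\underline{\bD}$ is free of rank $1|1$ over $\cO_T$ (homogeneous $\cO_T$-basis $1,\theta$), hence so is anything locally isomorphic to it as a sheaf of $\bD$-modules; thus a locally $\bD$-free rank $1$ subsheaf is in particular locally $\cO_T$-free of rank $1|1$. For the converse, suppose $\cF_T$ is $\phi$-invariant and locally $\cO_T$-free of rank $1|1$, i.e. $\cF_T\in P_\Pi(T)$. By Lemma \ref{basis} there is an open cover $\{V_i\}$ of $T$ and, on each $V_i$, an $\cO_T(V_i)$-basis $e,\ep$ of $\cF_T(V_i)$ with $\phi(e)=\ep$ and $\phi(\ep)=e$. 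The element $e$ is even, and from the formula above $e\cdot(a+b\theta)=ea+\ep b$; hence the right $\underline{\bD}(V_i)$-submodule of $\cF_T(V_i)$ generated by $e$ is all of $\cF_T(V_i)$, and $e\cdot(a+b\theta)=0$ forces $a=b=0$ because $e,\ep$ form an $\cO_T(V_i)$-basis. Thus $e$ is a free $\underline{\bD}(V_i)$-generator and $\cF_T$ is locally $\bD$-free of rank $1$ (indeed on a single even generator, that is, of $\bD$-rank $1|0$).

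Putting these together, $P_\Pi(T)$ is precisely the set of locally free right $\underline{\bD}$-subsheaves of $\bD$-rank $1$ of $\cO_T^{2|2}$; this identification is visibly natural in $T$, since the $\bD$-structure is carried by the fixed factor $\C^{2|2}$ and hence commutes with pullback, so it is an isomorphism of functors. The claimed description of the functor of points of $\bP^{1|1}_\Pi$ then follows from Proposition \ref{fopts-pi}. The one point requiring care is the converse rank comparison — that a $\phi$-invariant, locally $\cO_T$-free rank $1|1$ subsheaf is automatically locally $\bD$-free — and this is exactly the content of Lemma \ref{basis}: a $\phi$-adapted local basis $(e,\ep)$ is nothing but a local $\underline{\bD}$-basis with even generator $e$.
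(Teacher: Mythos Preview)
Your proof is correct and follows essentially the same strategy as the paper's (sketch) proof: use Lemma \ref{oddendo-lemma} to identify $\phi$-invariance with the right $\underline{\bD}$-submodule condition, and then match the rank conditions. The only minor difference is that you invoke Lemma \ref{basis} to exhibit a local $\bD$-generator directly, whereas the paper instead appeals to Lemma \ref{transfunctions-lemma} to observe that the change-of-basis automorphisms lie in $\bD^\times$; both devices accomplish the same rank identification.
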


\begin{proof} (Sketch). The right action on $\cF_T$ of $\bD$ corresponds to the
right action of $\bD$ on $\C^{2|2}\otimes \cO_T $ occurring on the
first term through the left multiplication by the odd endomorphism $\phi$, 
(see Lemma \ref{oddendo-lemma}).
Hence the $\phi$-invariant subsheaves are in one to one correspondence
with the right $\bD$-subsheaves of  ${\cO}_T^{2|2}$.  Notice furthermore that by 
Lemma \ref{transfunctions-lemma}, the change of basis of the 
free module $\cF_T(V_i)$ we used in \ref{fopts-pi}, 
corresponds to right multiplication by an element of 
$\bG_m^{1|1} \cong \bD^\times$, that is
the natural left action of $\bD^\times$ on the locally free, $\bD$-rank $1$
sheaf  $\cF_T(V_i)$ by automorphisms.
\end{proof}

\begin{remark}
The generalization from $\C^\times$ action on $\C^n$ (see the construction
of ordinary projective space Sec. \ref{prelim-sec}) to
$\bG^{1|1}_m \cong \bD^\times$ action on $\C^{n|n}$ gives us naturally
the odd endomorphism $\phi$, which is used to construct the $\Pi$-projective
space and ultimately it is the base on which $\Pi$-projective geometry
is built. The introduction of the skew-field $\bD$, $\bD^\times \cong 
\bG^{1|1}_m$ is not merely a computational device, but suggest a more
fundamental way to think about $\Pi$-projective geometry. We are unable
to provide a complete treatment here, but we shall do so in a 
forthcoming paper.
\end{remark}


\begin{thebibliography}{99}

\bibitem{Atiyah}
M.~F.~Atiyah. \textit{Spin structures on Riemann surfaces}. Ann. Sci. 
\'Ecole Norm. Sup. Vol.4, no. {4}, 47Ð62, 1971.

\bibitem{Berezin}
F.~A.~Berezin.  \textit{Introduction to Superanalysis}.  
D.~Reidel Publishing Company, Holland, 1987.

\bibitem{BerLeites}
F.~A.~Berezin., Leites, \textit{Supermanifolds},
{Dokl. Akad. Nauk SSSR}, Vol. 224, no. {3}, 505--508, 1975. 

\bibitem{ccf} C.~Carmeli, L.~Caston, R.~Fioresi, 
{\it  Mathematical Foundation of Supersymmetry}, 
with an appendix with I. Dimitrov, EMS Ser. Lect. Math., European
Math. Soc., Zurich, 2011.

\bibitem{CraRabin} L.~Crane, J.~Rabin, 
{\it Super Riemann surfaces: uniformization and Teichmuller theory}, 
Comm. Math. Phys. Vol. 113, no. {4}, 601-623, 1988.

\bibitem{Del} P.~Deligne, personal communication to Y.I. Manin, 1987.

\bibitem{dm} P.~Deligne, J.~Morgan,  {\it Notes on supersymmetry 
(following J.~Bernstein)},  in: ``Quantum Fields and Strings. 
A Course for Mathematicians'', Vol.~1, AMS, 1999.

\bibitem{Leites}
D.~A.~Leites,  \textit{Introduction to the theory of supermanifolds},  
Russian Math. Surveys \textbf{35}:~1 (1980), 1-64.

\bibitem{Levin}
A.~M.~Levin, \textit{Supersymmetric elliptic curves}, Funct. Analysis and Appl. 21 no. {3}, 243-244, 1987.

\bibitem{ma1}
Y.~I.~Manin,   \textit{Topics in Noncommutative Geometry}; 
Princeton University Press, 1991. 

\bibitem{ma2}
Y.~I.~Manin,   \textit{Gauge Field Theory and Complex Geometry}; 
translated by N. Koblitz and J.R. King.  Springer-Verlag, 
Berlin-New York, 1988.

\bibitem{rabin} P. G. O. Freund, J. M. Rabin, 
\textit{Supertori are elliptic curves}, Comm.
Math. Phys. 114(1), 131-145 (1988).


\bibitem{vsv1}
V.~S.~Varadarajan.  \textit{Lie Groups, Lie Algebras, and Their
 Representations}.  Graduate Text in Mathematics.  
Springer-Verlag, New York, 1984.

\bibitem{vsv2} V.~S.~Varadarajan,  {\it Supersymmetry for
    Mathematicians: An Introduction},  Courant Lecture Notes  {\bf 1},
  AMS, 2004.


\bibitem{witten} E. Witten, \textit{Notes on super Riemann 
surfaces and their moduli},  arXiv:1209.2459.


\end{thebibliography}
\end{document}